\pgfplotsset{
    colormap={mycolormap}{color=(lightgray) color=(white) color=(lightgray)}
}
\newcommand{\R}{\mathbb R}
\newcommand{\N}{\mathbb N}
\newcommand{\Z}{\mathbb Z}
\newcommand{\C}{\mathbb C}
\newcommand{\lesss}{\rotatebox[origin=c]{90}{$\land$}}
\newcommand{\less}{\ \lesss\ }
\newcommand{\biggg}{\rotatebox[origin=c]{90}{$\lor$}}
\newcommand{\bg}{\ \biggg\ }
\newcommand{\nc}{\newcommand}
\nc{\BCc}{{\mathbb{C}(\wp(z),\wp^\prime(z))}}
\nc{\BC}{{\mathbb C}}
\nc{\BQ}{{\mathbb Q}}
\nc{\BR}{{\mathbb R}}
\nc{\BZ}{{\mathbb Z}}
\nc{\BP}{{\mathbb P}}
\nc{\BN}{{\mathbb N}}
\nc{\BM}{{\mathbb M}}
\nc{\fH}{{\mathfrak{H}}}
\nc{\vp}{{\varepsilon}}\nc{\dpar}{{\partial}}\nc{\al}{{\alpha}}
\nc{\PSL}{PSL(2,\BR)}
\nc{\PS}{PSL(2,\BZ)}
 \nc{\CL}{PSL(2,\BZ/m\BZ)}
 \newtheorem{theorem}{Theorem}[section]
\newtheorem{lemma}[theorem]{Lemma}
\newtheorem{proposition}[theorem]{Proposition}
\theoremstyle{definition}
\newtheorem{definition}[theorem]{Definition}
\newtheorem{remark}[theorem]{Remark}
\newtheorem{example}[theorem]{Example}
\newcommand{\theoref}[1]{Theorem~\ref{#1}}
\newcommand{\lemref}[1]{Lemma~\ref{#1}}
\newcommand{\secref}[1]{Section~\ref{#1}}
\DeclareMathOperator{\rad}{\mathrm{rad}}
\DeclareMathOperator{\Hom}{\mathrm{Hom}}
\DeclareMathOperator{\Id}{\mathrm Id}
\DeclareMathOperator{\spec}{\mathrm Spec}
\DeclareMathOperator{\proj}{\mathrm Proj}
\newenvironment{prooof}
	{\textit{\textbf{Proof of Theorem 3.3.}}}
	{\hfill $\square$\vskip 8pt}
\newenvironment{proooof}
	{\textit{\textbf{Proof of Theorem 3.7.}}}
	{\hfill $\square$\vskip 8pt}
	 \newenvironment{rcases}
  {\left.\begin{aligned}}
  {\end{aligned}\right\rbrace}
\numberwithin{equation}{section}  
\begin{document}
\title{Representation theory and differential equations}
\address{Ahmed Sebbar\\
Chapman University, One University Drive, 
   \\92866, Orange, CA \&
   Univ. Bordeaux, IMB 
   \\UMR 5251, F-33405 Talence, France.}
   
         \email{sebbar@chapman.edu, ahmed.sebbar@math.u-bordeaux.fr.}
\author{Ahmed Sebbar and Oumar Wone}
\address{Oumar Wone\\
}
         \email{wone@chapman.edu}
         \keywords{Frobenius determinants, Bessel functions, Almost-Frobenius structures, toric varieties, Cayley-Omega process, classical invariant theory}
 \subjclass[2020]{20C15, 14M25, 20G20}
\date{}
\begin{abstract}
We study the geometry and partial differential equations arising from the consideration of group-determinants, and representation theory. The simplest and most striking such example is undoubtedly that of the Humbert operator, associated with the cyclic group $\Z/3\Z$, 
$\displaystyle \Delta_3=\dfrac{\partial^3}{\partial x^3}+\dfrac{\partial^3}{\partial y^3}+\dfrac{\partial^3}{\partial z^3}-3\dfrac{\partial^3}{\partial x\partial y\partial z}$.  This operator appears as a natural extension of the Laplacian in dimension 2. Another originality of our work is to show that the spectral theory of operators associated with Frobenius determinants is closely linked to finite Fourier transform theory.

\end{abstract}
\thanks{Acknowledgement: The authors would like to thank the referees for their careful reading and for the reference \cite{krasnov}. We plan, in a subsequent study, to investigate the relations of \cite{krasnov} with our work. The second author expresses warm thanks for a generous invitation to Chapman University, where this work was initiated, and for the creative and friendly atmosphere which he experienced during the stay}

\maketitle 
\tableofcontents
\section{Introduction}
\label{intro}
The solution of a partial differential equation may often be obtained in a very convenient form by the consideration of definite integrals: thus for instance, the solution of the heat equation $\displaystyle\dfrac{\partial^2V}{\partial x^2}=\dfrac{\partial V}{\partial t}$ which reduces to $f(x)$ when $t=0$ is given by
\begin{equation}
\label{f1}
\displaystyle V=\dfrac{1}{\sqrt{\pi}}\int_{-\infty}^{+\infty}f(x+2u\sqrt{t})e^{-u^2}du.
\end{equation}

Whittaker in \cite{whittaker1} gave the complete solution of the three dimensional Laplace's equation in the form of a definite integral. 
It is Radon who started in 1917, the study of integral transforms. He studied transforms of functions $f:\R^2\to\R$ with appropriate decay conditions at infinity. More precisely to $L\subset \R^2$, an oriented line, he associates
 $$\displaystyle\phi(L):=\int_Lf.$$
 Remark that there exists an inversion formula for the Radon transform, see \cite{gelfand2000, radon1917}. The next important study of integral transforms is due to F. John in $1938$ \cite{gelfand2000, john1938}. He was interested in integral transforms of functions $f:\R^3\to\R$ (satisfying appropriate conditions which make the integrals well-defined). He defined, for $L$ an oriented line, the transform
 $$\displaystyle\phi(L):=\int_Lf$$
 that is
 $$\displaystyle\phi(\alpha_1,\alpha_2,\beta_1,\beta_2)=\int_{-\infty}^{+\infty}f(\alpha_1s+\beta_1,\alpha_2s+\beta_2,s)ds.$$
 Because the space of oriented lines in $\R^3$ is four dimensional, it is expected that $\phi$ has to satisfy $1=4-3$ extra condition. To see this we differentiate under the integral sign to obtain the ultrahyperbolic wave equation
 $$\displaystyle\dfrac{\partial^2\phi}{\partial\alpha_1\partial\beta_2}=\dfrac{\partial^2\phi}{\partial\alpha_2\partial\beta_1}.$$
 After changing the coordinates $\alpha_1=x+y,\,\alpha_2=t+z,\,\beta_1=t-z,\,\beta_2=x-y$, the ultrahyperbolic wave equation becomes
 $$\displaystyle\dfrac{\partial^2\phi}{\partial x^2}+\dfrac{\partial^2\phi}{\partial z^2}-\dfrac{\partial^2\phi}{\partial y^2}-\dfrac{\partial^2\phi}{\partial t^2}=0.$$ 
 Later on in 1967, R. Penrose \cite{penrose1967} introduced the twistor transform in order to find solutions of the wave equation in Minkowski space. This was the first time sophisticated methods like cohomology, were used in such questions.
 
Therefore the problem of giving integral representations of solutions to constant coefficients differential equations is an interesting problem. In this direction one can remind the famous Ehrenpreis-Malgrange-Palamodov theorem (see \cite{bjork}) beside the above cited work of Penrose, F. John, Whittaker and the work of Bateman \cite{bateman1}.

Recently \cite{murray1985, eastwood19851}, the results given by Whittaker have been generalized (by using cohomological methods, but with no integral representation) for the partial differential equation
$$\displaystyle  D_f\phi:=f\left(\dfrac{\partial}{\partial x_0},\ldots,\dfrac{\partial}{\partial x_n}\right)\phi(x_0,\ldots,x_n)=0$$
arising from the consideration of a polynomial $f\left( x_0,\ldots, x_n\right),\,n\geq 2$. In the paper \cite{eastwood19851}, a full description of the holomorphic functions in the kernel of $D_f$ is given by using an approach based on the twistor transform. 

One of the themes of this paper is the study of the partial differential operators arising from Frobenius determinants. Let $G$ be a finite group of order $\sharp{G}=n$ and $X_g$ a set of independent variables indexed by the elements of the group $G$. Then we define the Frobenius matrix as $M=(X_{gh^{-1}})_{g,h}$ and the Frobenius determinant as 
\begin{equation*}
\begin{split}
\label{f388}
\Theta(G)((X_g)_{g\in G})=\det(X_{gh^{-1}}).
\end{split}
\end{equation*}
This determinant is a homogeneous polynomial in the variables $X_g$ of degree $n$. Moreover it has important factorization properties, see \theoref{dedekind} and \theoref{frobenius1}. If we replace the variables $X_g$ by the corresponding partial differential symbols $\partial_g=\frac{\partial}{\partial X_g}$, $\Theta(G)$ becomes a partial differential operator $\Theta(G)((\partial_g)_{g\in G})$ in the symbols $\partial_g$ and the factorization property of $\Theta(G)$ gives a factorization of $\Theta(G)((\partial_g)_{g\in G})$. This enables us, see \secref{frobenius}, to find holomorphic functions in the kernel of $\Theta(G)((\partial_g)_{g\in G})$. 

According to the fundamental theorem of finite Abelian groups, any finite Abelian group is isomorphic to a direct product of cyclic groups of prime-power order, where the decomposition is unique up to the order in which the factors are written. Consequently the spectral theory of operators associated with Group determinants is closely linked to finite Fourier transform theory and circulant matrices. This  is illustrated below in the examples of $\displaystyle \Z/4\Z $ and $\Z/2\Z \times \Z/2\Z $.

In \secref{frobenius} we study the Eigenvalue problem of operators arising from Frobenius determinants in the case of abelian groups. 

Finally in (our principal) \secref{unitsphere} we study the structure of the linear algebraic group consisting of Frobenius matrices which are invertible, for a fixed group $G$. We also establish links between Frobenius determinants and almost-Frobenius structures as well as toric geometry. The appearance of the structures of certain toric varieties is due precisely to the fact that the symbols of Group determinants partial differential operators  for finite abelian groups, factorize into linear polynomials.

For an excellent review on group-determinant we refer to \cite{conrad1998}. We are aware that some of our proofs could be shortened by using more powerful techniques in algebra or in differential or algebraic geometry but we restrict to the methods used below for simplicity.

In this paper, our study is of analytic character, but we should point out that an important application of Frobenius determinants is made in algebraic number theory \cite[chap.~21]{lang1987} where one of the striking results says that if $K$ is an imaginary quadratic field and $H$ is the Hilbert class field of $K$, the Dedekind zeta function $\zeta_H(s)$ can be expressed as the Frobenius determinant of a matrix whose entries are the partial zeta functions of $K$.
\section{Partial differential equations defined by group-determinants}
\label{frobenius}
\subsection{"The kernels" of the PDEs arising from Frobenius determinants}
Let $G$ be a finite group of order $\sharp(G)=n$ and $X_g$ a set of independent variables indexed by the elements of the group $G$. We define the group determinant or Frobenius determinant as
$$\Theta(G)((X_g)_{g\in G})=\det(X_{gh^{-1}}).$$ 
In the abelian case we have
\begin{theorem}[Dedekind, Frobenius, \cite{conrad1998}]
\label{dedekind}
For $G$ a finite abelian group one has
$$\Theta(G)((X_g)_{g\in G}):=\det(X_{gh^{-1}})=\displaystyle\prod_{\chi\in\widehat{G}}(\sum_{g\in G}\chi(g)X_g)$$ 
where $\widehat{G}$ is the character group of $G$, i.e. the group of homomorphisms from $G\to \C^\times$.
\end{theorem}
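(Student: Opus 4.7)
The strategy is to diagonalize the Frobenius matrix $M = (X_{gh^{-1}})_{g,h \in G}$ by exhibiting an explicit eigenbasis built from the characters of $G$. Conceptually, $M$ is the matrix, in the basis $\{g\}_{g\in G}$, of the operator ``left multiplication by $\sum_{g} X_g\, g$'' on the group algebra $\C[G]$ (after extension of scalars to $\C[X_g : g \in G]$); because $G$ is abelian, this regular representation decomposes as a direct sum of one-dimensional representations indexed by $\widehat G$, and multiplication by $\sum_g X_g\, g$ acts as a scalar on each summand. This picture tells us what the answer should be and guides the explicit calculation, although it is not strictly required.

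First I would introduce, for each $\chi \in \widehat G$, the column vector $v_\chi = \bigl(\chi(g^{-1})\bigr)_{g \in G}$, and check that it is an eigenvector of $M$. Using the substitution $k = gh^{-1}$ (as $h$ runs over $G$, so does $k$) together with the homomorphism property $\chi(g^{-1}k) = \chi(g^{-1})\chi(k)$, a one-line calculation gives
\[
(M v_\chi)_g \;=\; \sum_{h \in G} X_{gh^{-1}}\,\chi(h^{-1}) \;=\; \sum_{k \in G} X_k\, \chi(g^{-1}k) \;=\; \chi(g^{-1}) \sum_{k \in G} X_k\, \chi(k),
\]
so $M v_\chi = \lambda_\chi\, v_\chi$ with eigenvalue $\lambda_\chi = \sum_{g \in G} X_g\, \chi(g)$.

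Next I would invoke the orthogonality relations $\sum_{g\in G}\chi(g)\overline{\chi'(g)} = |G|\,\delta_{\chi,\chi'}$ to conclude that the family $\{v_\chi\}_{\chi\in \widehat G}$ is linearly independent. Since $|\widehat G| = |G|$ in the abelian case, the $v_\chi$ form a full eigenbasis, $M$ is diagonalizable over $\C[X_g : g\in G]$, and its determinant is the product of its eigenvalues:
\[
\det(X_{gh^{-1}}) \;=\; \prod_{\chi \in \widehat G} \lambda_\chi \;=\; \prod_{\chi \in \widehat G} \sum_{g \in G} \chi(g)\, X_g.
\]

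I do not expect any serious obstacle; the proof is essentially bookkeeping. The one mildly delicate point is the choice of normalization $v_\chi^{(g)} = \chi(g^{-1})$ in place of $\chi(g)$. The latter also yields an eigenbasis, but with eigenvalues $\sum_g X_g\, \chi(g^{-1})$; to match the formula stated in the theorem one must then remark that $\chi \mapsto \chi^{-1}$ is a bijection of $\widehat G$, so the product over characters is unchanged.
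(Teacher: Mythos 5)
Your proof is correct. The paper itself states this theorem without proof, citing Conrad's survey, so there is no internal argument to compare against; your eigenvector computation $M v_\chi = \bigl(\sum_g \chi(g)X_g\bigr) v_\chi$ followed by the observation that the $|G|$ vectors $v_\chi$ form a basis (by orthogonality of characters, or equivalently by the invertibility of the character table, which is the paper's Lemma~2.5) is exactly the standard argument, and it dovetails with the group-algebra picture the paper uses elsewhere (in the proof of Theorem~3.3 the authors note that $(X_{gh^{-1}})$ is the matrix of left multiplication by $\sum_g X_g\,g$ on $\C[G]$). The one point worth stating explicitly is that the change-of-basis matrix has constant complex entries and is invertible over $\C$, so the identity $\det M=\prod_\chi\lambda_\chi$ holds in the polynomial ring $\C[X_g]$ and not merely in its fraction field; your normalization remark about $\chi\mapsto\chi^{-1}$ is also handled correctly.
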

\begin{example}
\label{ex}
Let $G=\Z/n\Z$ then the group determinant reduces to the circulant determinant defined as
$$\Theta(\Z/n\Z)(X_0,\ldots,X_n)=\displaystyle \left|\begin{array}{ccccc}X_0 & X_1 & X_2 & \ldots & X_{n-1} \\X_{n-1} & X_0 & X_1 & \ldots & X_{n-2} \\\vdots & \vdots & \vdots & \vdots & \vdots \\X_1 & X_2 & X_3 & \ldots & X_0\end{array}\right|=\prod_{\omega\in\C,\,\omega^n=1}(\sum_{0}^{n-1}\omega^{k}X_k),$$
with $\omega$ a primitive $n$-th root of unity.

Let us make precise the calculation of the possible group determinants when $n=4$. There two finite (abelian) groups of order $4$, $\Z/4\Z$ and the Klein group $\Z/2\Z\times\Z/2\Z$. We first start with the case of the cyclic group $\Z/4\Z=\{0,1,2,3\}$. Its character table is
$$\begin{tabular}{|c|c|c|c|c|}\hline  $\Z/4\Z$& 0 & 1 & 2 & 3 \\\hline $\chi_0$ & 1 & 1 & 1 & 1 \\\hline $\chi_1$ & 1 & i & -1 & -i \\\hline $\chi_2$ & 1 & -1 & 1 & -1 \\\hline $\chi_3$ & 1 & -i & -1 & i \\\hline \end{tabular}$$
This gives
\begin{equation*}
\begin{split}
\Theta(\Z/4\Z)(X_0,X_1,X_2,X_3)&=(X_0+X_1+X_2+X_3)(X_0-X_1+X_2-X_3)\\&(X_0+iX_1-X_2-iX_3)
(X_0-iX_1-X_2+iX_3)\\
&=\{(X_0+X_2)^2-(X_1+X_3)^2\}\{(X_0-X_2)^2+(X_1-X_3)^2\}.
\end{split}
\end{equation*}
For the case of the Klein group $\Z/2\Z\times\Z/2\Z=\{(0,0),(1,0),(0,1),(1,1)\}$ we have the following character table
$$\begin{tabular}{|c|c|c|c|c|}\hline $\Z/2\Z\times\Z/2\Z$ & (0,0) & (1,0) & (0,1) & (1,1) \\\hline $\chi_{(0,0)}$ & 1 & 1 & 1 & 1 \\\hline $\chi_{(1,0)}$ & 1 & -1 & 1 & -1 \\\hline $\chi_{(0,1)}$ & 1& 1 & -1 & -1 \\\hline $\chi_{(1,1)}$ & 1 & -1 & -1 & 1 \\\hline \end{tabular}.$$
This gives
\begin{equation*}
\begin{split}
\Theta(\Z/2\Z\times\Z/2\Z)(X_0,X_1,X_2,X_3)&=(X_0+X_1+X_2+X_3)(X_0+X_1-X_2-X_3)\\&(X_0-X_1+X_2-X_3)
(X_0-X_1-X_2+X_3)\end{split}
\end{equation*}
where we identify $(0,0)$ with $0$, $(1,0)$ with $1$, $(0,1)$ with $2$ and $(1,1)$ with $3$.
\end{example}

If in the example above we replace the independent variables $X_i$ by $\partial_i:=\dfrac{\partial}{\partial x_i}$, $i=0,\ldots,n-1$. We obtain the differential operator
\begin{equation}
\label{f43}
\Theta(\Z/n\Z)(\partial_0,\ldots,\partial_{n-1})=\prod_{\omega\in\C,\,\omega^n=1}(\sum_{0}^{n-1}\omega^k\partial_k).
\end{equation}
The case $n=3$ gives the Humbert Laplacian \cite{humbert1929}
$$\Delta_3=\dfrac{\partial^3}{\partial x^3}+\dfrac{\partial^3}{\partial y^3}+\dfrac{\partial^3}{\partial z^3}-3\dfrac{\partial^3}{\partial x\partial y\partial z}.$$
Let $\mathscr{V}$ be the Vandermonde matrix constructed from the $n$-th roots of unity $\omega_0,\ldots,\omega_{n-1}$ and let
\begin{equation}
\label{f44}
^t(u_{\omega_0},\ldots,u_{\omega_{n-1}}):=\mathscr{V}^{-1}\, ^t(X_0,\ldots,X_{n-1}).
\end{equation}
Then because of the non-vanishing of the Vandermonde determinant constructed with the roots of unity $\omega_i$, the $u_\omega$ are independent variables. We now consider the equation
\begin{equation}
\label{f45}
\Theta(\Z/n\Z)(\partial_0,\ldots,\partial_{n-1})w=0
\end{equation}
 with integer coefficients. From the factorization property of the differential operator $\Theta(\Z/n\Z)(\partial_0,\ldots,\partial_{n-1})$, we have by making use of the change of variables
 $$^t(u_{\omega_0},\ldots,u_{\omega_{n-1}})$$
 \begin{equation}
 \label{f46}
\displaystyle \Theta(\Z/n\Z)(\partial_0,\ldots,\partial_{n-1})w=0\iff\prod_{i=0}^{n-1}\partial_{u_{\omega_i}}w=0. 
 \end{equation}
 Let $\omega_0,\omega_1,\ldots,\omega_{n-1}$ be the $n$ roots of $\omega^n=1$. The general solution to the partial differential equation $\Theta(\Z/n\Z)(\partial_0,\ldots,\partial_{n-1})w=0$ is given by
 \begin{equation}
 \label{f47}
 \sum_{0}^{n-1}g_{\omega_i}(u_{\omega_0},u_{\omega_1},\ldots,\widehat{u_{\omega_i}},\ldots,u_{\omega_{n-1}}),
 \end{equation}
with $g_{\omega_i}$ is holomorphic for every $\omega_i$ and $\widehat{u_{\omega_i}}$ means that we omit the corresponding variable.

  \begin{lemma}
 \label{devisme}
 Let $(\alpha_0,\ldots,\alpha_{n-1})\in\C^n\setminus\{0\}$ such that $\Theta(\Z/n\Z)(\alpha_0,\ldots,\alpha_{n-1})=0$ and $f$ holomorphic in $\C$. Then $f(\alpha_0x_0+\ldots+\alpha_{n-1}x_{n-1})$ is annihilated by $\Theta(\Z/n\Z)(\partial_0,\ldots,\partial_{n-1})$.
 \end{lemma}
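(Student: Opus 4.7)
The plan is to exploit the factorization of $\Theta(\Z/n\Z)$ provided by \theoref{dedekind}, transplanted to the symbols $\partial_k$ via \eqref{f43}. Since the factors
$$L_\omega := \sum_{k=0}^{n-1}\omega^k\partial_k, \qquad \omega^n=1,$$
are linear differential operators with constant coefficients, they pairwise commute, so the order in which we write the product is irrelevant. The strategy is therefore to identify one factor $L_{\omega_0}$ that annihilates $f(\alpha_0x_0+\cdots+\alpha_{n-1}x_{n-1})$ and then push it to the right of the product.

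First I would compute the action of a single factor on the composite function. By the chain rule,
$$\partial_k\bigl(f(\alpha_0x_0+\cdots+\alpha_{n-1}x_{n-1})\bigr)=\alpha_k\,f'(\alpha_0x_0+\cdots+\alpha_{n-1}x_{n-1}),$$
so that
$$L_\omega\bigl(f(\alpha_0x_0+\cdots+\alpha_{n-1}x_{n-1})\bigr)=\Bigl(\sum_{k=0}^{n-1}\omega^k\alpha_k\Bigr)\,f'(\alpha_0x_0+\cdots+\alpha_{n-1}x_{n-1}).$$
Hence $L_\omega$ annihilates $f(\alpha_0x_0+\cdots+\alpha_{n-1}x_{n-1})$ whenever the scalar $\sum_{k=0}^{n-1}\omega^k\alpha_k$ vanishes.

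Next I would invoke the hypothesis. By \theoref{dedekind} applied at the point $(\alpha_0,\ldots,\alpha_{n-1})$,
$$0=\Theta(\Z/n\Z)(\alpha_0,\ldots,\alpha_{n-1})=\prod_{\omega^n=1}\Bigl(\sum_{k=0}^{n-1}\omega^k\alpha_k\Bigr),$$
so at least one $n$-th root of unity $\omega_0$ satisfies $\sum_{k=0}^{n-1}\omega_0^k\alpha_k=0$. Using the commutativity of the $L_\omega$, I rewrite
$$\Theta(\Z/n\Z)(\partial_0,\ldots,\partial_{n-1})=\Bigl(\prod_{\omega^n=1,\;\omega\neq\omega_0}L_\omega\Bigr)\circ L_{\omega_0},$$
and by the previous step $L_{\omega_0}\bigl(f(\alpha_0x_0+\cdots+\alpha_{n-1}x_{n-1})\bigr)=0$. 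Applying the remaining factors to $0$ yields $0$, which proves the lemma.

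There is really no obstacle here, the argument is a direct consequence of the Dedekind–Frobenius factorization combined with the chain rule and the commutativity of constant-coefficient operators. The only minor point one should mention is that $f$ being holomorphic on $\C$ guarantees that $f'$ exists, so the intermediate step is meaningful; no convergence or domain issues arise.
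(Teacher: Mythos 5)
Your argument is correct and is exactly the one the paper intends: the paper's proof is the single line ``this follows from the chain rule and the definition of $\Theta(\Z/n\Z)(\partial_0,\ldots,\partial_{n-1})$,'' which is precisely your combination of the Dedekind--Frobenius factorization, the chain-rule computation of each linear factor, and the commutativity of the constant-coefficient operators. Your write-up simply makes explicit the details the paper leaves implicit.
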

 \begin{proof}
 This follows from the chain rule and the definition of $\Theta(\Z/n\Z)(\partial_0,\ldots,\partial_{n-1})$.
 \end{proof}
In general
 \begin{lemma}
 \label{f51}
 Let $G$ be a finite abelian group and $\widehat{G}$ its set of characters. Then the determinant of the matrix
 $$M=(\chi(g)),\,(\chi,g)\in (\widehat{G}\times G)$$
 is non zero. 
 \end{lemma}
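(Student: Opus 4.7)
The plan is to exploit the orthogonality relations for the characters of a finite abelian group, or equivalently the linear independence of characters.

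First, I would recall the standard orthogonality relation: for any two characters $\chi, \chi' \in \widehat{G}$,
\begin{equation*}
\sum_{g \in G} \chi(g)\,\overline{\chi'(g)} = \sharp(G)\,\delta_{\chi,\chi'}.
\end{equation*}
This is a routine consequence of the fact that a non-trivial character of a finite group sums to zero over the group, applied to the character $\chi \cdot \overline{\chi'} = \chi \cdot (\chi')^{-1}$, which is the trivial character if and only if $\chi = \chi'$.

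Next, I would package this as a matrix identity. Denote $M = (\chi(g))_{(\chi,g) \in \widehat{G}\times G}$, with rows indexed by characters and columns by group elements, and let $M^* = (\overline{M})^{T}$ be its conjugate transpose. The orthogonality relation above is exactly the statement
\begin{equation*}
M M^* = \sharp(G)\, I_{\sharp(G)}.
\end{equation*}
Taking determinants of both sides yields $|\det M|^2 = \sharp(G)^{\sharp(G)}$, which is a positive real number, so $\det M \neq 0$.

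The only non-trivial step is the orthogonality relation, which is classical; there is no real obstacle. An alternative, equally short route would be to appeal directly to Dedekind's theorem on the linear independence of distinct characters: if $\det M$ vanished, the rows of $M$ would be $\mathbb{C}$-linearly dependent, yielding a non-trivial relation $\sum_{\chi} a_\chi \chi(g) = 0$ valid for every $g \in G$, contradicting that independence. Either approach completes the proof in a few lines and, as a by-product, gives the explicit value $|\det M| = \sharp(G)^{\sharp(G)/2}$, which generalizes the Vandermonde computation carried out for $G = \Z/n\Z$ in the preceding paragraphs of the paper.
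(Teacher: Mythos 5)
Your proof is correct, but it takes a different route from the paper's. The paper disposes of the lemma in one line by observing that $M$ is the transition matrix between two bases of the group algebra $\C[G]$, namely $(g)_{g\in G}$ and $\bigl(\sum_{g\in G}\chi(g)g\bigr)_{\chi\in\widehat{G}}$; invertibility of a change-of-basis matrix gives $\det M\neq 0$ at once, though the underlying fact that the second family is a basis is itself usually established by the very orthogonality relations you invoke. Your argument instead computes directly: the relation $MM^{*}=\sharp(G)\,I$ is correctly derived from the vanishing of the sum of a nontrivial character over $G$ (applied to $\chi\cdot\overline{\chi'}$), and taking determinants yields not just nonvanishing but the explicit value $|\det M|=\sharp(G)^{\sharp(G)/2}$, which the paper's structural argument does not provide and which indeed matches the Vandermonde case $G=\Z/n\Z$. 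Your second alternative, via linear independence of distinct characters, is also valid and is essentially the same fact the paper is appealing to, phrased contrapositively. In short: the paper's proof is shorter and more conceptual, yours is more self-contained and quantitatively sharper; both are complete.
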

 \begin{proof}
 This is classical and follows from the consideration of the group algebra $\C[G]$ which admits the two bases $(g)_{g\in G}$ and $\displaystyle(\sum_{g\in G}\chi(g)g)_{\chi\in \widehat{G}}$.
 \end{proof}
To a given $X_g,\,g\in G$ we associate the partial derivative $\partial_g=\dfrac{\partial}{\partial X_g}$ and we obtain the associated partial differential equation
  \begin{equation}
  \label{f52}
  \Theta(G)((\partial_g)_{g\in G})w=0.
  \end{equation} 
   \lemref{f51} allows us to introduce the following independent variables $u_\chi$
  $$\sum_{\chi\in \widehat{G}}\chi(g)u_{\chi}:=X_g,\,g\in G.$$
  Hence equation \eqref{f52} is equivalent to
  \begin{equation}
  \label{f53}
  \displaystyle \prod_{\chi\in\widehat{G}}\partial_{u_{\chi}}w=0,
  \end{equation}
  which is a separation of variable of \eqref{f52}. Likewise the \lemref{devisme} immediately generalizes to the case of an arbitrary finite abelian group $G$.
  
  We consider the general case of the partial differential equation defined by the Frobenius determinant when the finite group $G$ is non-abelian (hence its order is at least $\geq 6$). That is we consider the equation
   \begin{equation}
   \label{f57}
   \Theta(G)((\partial_g)_{g\in G})w=0,
   \end{equation}
   where the notation $\partial_g$ is defined analogously as in equation \eqref{f52}. First of all similarly to \theoref{dedekind} we have \cite{conrad1998, dickson1902}
   \begin{theorem}[Frobenius]
   \label{frobenius1}
   Let $G$ be a finite non-abelian group of order $n\geq 6$. Let $s$ be the number of its conjugacy classes. Then the group determinant $\Theta(G)((X_g)_{g\in G})$ as a polynomial in the variables $(X_g)_{g\in G}$ admits the following factorization
   $$\displaystyle \Theta(G)((X_g)_{g\in G})=\prod_{i=1}^s\Phi_i^{f_i}((X_g)_{g\in G}).$$
   In this expression each $\Phi_i$ is an irreducible polynomial in the variables $(X_g)_{g\in G}$ of degree $f_i$ with $f_i|n$ and $\sum_{i=1}^sf_i^2=n$.
   \end{theorem}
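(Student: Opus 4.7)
The plan is to realize the Frobenius matrix $M=(X_{gh^{-1}})_{g,h\in G}$ as the matrix of a generic element of the group algebra acting by the (left) regular representation. Indeed, if one sets $L=\sum_{g\in G}X_g\,\rho_{\mathrm{reg}}(g)\in\Endom(\C[G])$, then in the basis $(h)_{h\in G}$ one computes $L\cdot h=\sum_g X_g (gh)=\sum_k X_{kh^{-1}} k$, so that the matrix of $L$ is precisely $M$ up to a relabeling. Consequently $\Theta(G)=\det(L)$.

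Next I would invoke Maschke's theorem and the standard decomposition of the regular representation
\[
\rho_{\mathrm{reg}}\;\cong\;\bigoplus_{i=1}^{s}f_i\,\rho_i,
\]
where $\rho_1,\ldots,\rho_s$ are (representatives of) the irreducible complex representations of $G$, of dimensions $f_1,\ldots,f_s$. Setting
\[
\Phi_i((X_g)_{g\in G})\;=\;\det\!\Bigl(\sum_{g\in G}X_g\,\rho_i(g)\Bigr),
\]
the multiplicativity of the determinant on a block diagonal operator gives at once
\[
\Theta(G)((X_g)_{g\in G})\;=\;\prod_{i=1}^{s}\Phi_i^{f_i}((X_g)_{g\in G}).
\]
Each $\Phi_i$ is homogeneous of degree $f_i=\dim\rho_i$ in the $X_g$, which already yields the degree statement, while comparing overall degrees gives $\sum_i f_i^2=n$ (this just reflects $\dim\C[G]=n$).

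The main obstacle is the \emph{irreducibility} of each $\Phi_i$ as a polynomial in the $X_g$. Here I would rely on Burnside's density theorem: since $\rho_i$ is irreducible, the linear map $\C[G]\to M_{f_i}(\C)$, $\sum_g c_g g\mapsto \sum_g c_g\rho_i(g)$, is surjective. Therefore, regarding the $X_g$ as indeterminates, the entries of the matrix $A_i(X):=\sum_g X_g\rho_i(g)$ are $\C$-linearly independent linear forms in the $X_g$, so after a $\C$-linear change of variables $\Phi_i$ becomes the determinant of a generic $f_i\times f_i$ matrix, which is a classical irreducible polynomial. The polynomials $\Phi_i$ for distinct $i$ are moreover pairwise non-proportional because they are determinants of inequivalent representations (distinguished, e.g., by their characters $\chi_i$, which are linearly independent as functions on $G$).

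Finally, $s$ equals the number of conjugacy classes of $G$ by the classical orthogonality relations, and the divisibility $f_i\mid n$ is the Frobenius divisibility theorem (proved by showing that each $\chi_i(g)$ is an algebraic integer and that $n/f_i$ is an algebraic integer as well). Assembling these ingredients gives the required factorization with all the stated properties.
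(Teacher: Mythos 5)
The paper itself gives no proof of Theorem~\ref{frobenius1}; it is quoted from the literature with the citations \cite{conrad1998, dickson1902}, and your argument is exactly the standard representation-theoretic proof found in those sources, so there is nothing to contrast. Your opening identification of the Frobenius matrix with the matrix of left multiplication by $\sum_g X_g\,g$ on $\C[G]$ is, moreover, the same computation the paper carries out later in the proof of Theorem~\ref{fro} when it identifies $\mathbb{S}_G$ with the unit group of $\C[G]\simeq\prod_j M_{n_j}(\C)$. The argument is correct: block-diagonalizing the regular representation gives the factorization and the degree count $\sum_i f_i^2=n$; surjectivity of $\C[G]\to M_{f_i}(\C)$ (Burnside, or equivalently linear independence of the matrix coefficients of $\rho_i$) shows the entries of $\sum_g X_g\rho_i(g)$ are independent linear forms, so $\Phi_i$ is a generic determinant up to a linear change of variables and hence irreducible; and $f_i\mid n$ is correctly delegated to Frobenius divisibility. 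The only point worth tightening is the pairwise distinctness of the $\Phi_i$: rather than just saying the representations are ``distinguished by their characters,'' note that substituting $X_e\mapsto X_e+t$ turns $\Phi_i$ into $\det\bigl(tI+\sum_g X_g\rho_i(g)\bigr)$, whose coefficient of $t^{f_i-1}$ is the linear form $\sum_g\chi_i(g)X_g$; linear independence of the irreducible characters then shows $\Phi_i\neq\Phi_j$ for $i\neq j$, so the factorization has exactly $s$ distinct irreducible factors.
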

  Due to the independence of the variables $(X_g)_{g\in G}$, the partial derivatives $(\partial_g)_{g\in G}$ commute. Therefore the evaluations of the $\Phi_i$ on these partial derivatives also commute. Thus to give a solution of the equation \eqref{f57}, it suffices to find a non-trivial function in the kernel of any of the operators
   $$\Phi_i((\partial_g)_{g\in G}).$$
   A fundamental property of each factor $\Phi_i$ of the group determinant is that one can write it in good coordinates as the determinant of a square $f_i\times f_i$ matrix the entries of which are linear polynomials in $(X_g)_{g\in G}$ and which are algebraically independent over $\C$, \cite[p.~377]{conrad1998}. Therefore one can write $\Phi_i$ as $\det((z_{jl})_{1\leqslant j\leqslant f_i,1\leqslant l\leqslant f_i})$ with $(z_{jl})_{1\leqslant j\leqslant f_i,1\leqslant l\leqslant f_i}$ a matrix of variables. So in order to find a holomorphic function in the kernel of $\Phi_i(\partial_g)$ it suffices to find one in the kernel of $\Omega=\det(\partial_{z_{jl}})$. A central observation is that $\Omega$ is the Cayley operator of classical invariant theory, see \cite{dolgachev2003, mukai2003, olver1999}. One of its important properties is that for any $A\in M_{f_i}(\C) $ and any polynomial $q$ in $Z=(z_{jl})_{1\leqslant j\leqslant f_i,1\leqslant l\leqslant f_i}$ we have
   $$\Omega(A\star q)=\det(A)(A\star \Omega(q)),$$
   here $A\star q(Z):=q(A^tZ)$, with $A^t$ is the transpose of $A$. Taking $A$ to be a matrix of rank strictly less than $f_i$, we see that $\Omega(A\star q)=0$, i.e. $A\star q$ belongs to the kernel of $\Omega$. In a similar spirit one can find functions in the kernel of $\Omega=\det(\partial_{z_{jl}})$ in the following way. Let us first introduce some notations from classical invariant theory. We set
   $$Y=^t(\partial_{z_{jl}})_{1\leqslant j\leqslant f_i,1\leqslant l\leqslant f_i}$$
   and define the operators of polarization $(\Delta_{jl})_{1\leqslant j\leqslant f_i,1\leqslant l\leqslant f_i}$ by the formula \cite[p.~48]{procesi2007}
   $$ZY=:(\Delta_{jl})_{1\leqslant j\leqslant f_i,1\leqslant l\leqslant f_i},$$
   that is
   $$\Delta_{jl}=\displaystyle\sum_{h=1}^{f_i}z_{jh}\frac{\partial}{\partial z_{lh}},\,\,{1\leqslant j\leqslant f_i,1\leqslant l\leqslant f_i}.$$
   For $j\not=l$, $\Omega$ commutes with $\Delta_{jl}$. Therefore for the commutators we have \cite[p.~56]{procesi2007}
   $$\left[\Omega,\Delta_{jl}^r\right]=0,\,\,r\in\Z_{>0}, {1\leqslant j\leqslant f_i,1\leqslant l\leqslant f_i}.$$
   Hence by developing $\Omega=\det(Y)$ with respect to the $l$-th column for $1\leqslant l\leqslant f_i $ one finds that for any holomorphic function $f(z_{l1},z_{l2},\ldots,z_{lf_i})$, $1\leqslant l\leqslant f_i $, the function
   $$\Delta_{jl}^rf(z_{l1},z_{l2},\ldots,z_{lf_i}),\,r\in\Z_{>0}$$
   is in the kernel of $\Omega$.
   
 Moreover if $$\sum_g\alpha_g\frac{\partial}{\partial_{X_g}}, (\alpha_g)_{g\in G}\in\C^n\setminus\{(0,0,\ldots,0)\},\,n=\sharp{G}$$ is a first order differential operator, then every holomorphic function in $\C^n$ in its kernel is of the form
   $$\mathscr F(\alpha_{g_2}X_{g_1}-\alpha_{g_1}X_{g_2},\alpha_{g_3}X_{g_2}-\alpha_{g_2}X_{g_3},\ldots,\alpha_{g_n}X_{g_{n-1}}-\alpha_{g_{n-1}}X_{g_n}).$$
   Here $g_1$, $g_2$, $\ldots$, $g_n$ are the elements of $G$ listed in this order, and $\mathscr F(y_1,y_2,\ldots,y_{n-1})$ is a holomorphic function on $\C^{n-1}$. 
 \begin{proof}
 This follows from \cite[p.~31-32]{goursat1891}.
 \end{proof}  
 Let us now consider the reduced operator 
 $$P((\partial_g)_{g\in G})=\displaystyle\prod_{i=1}^s\Phi_i((\partial_g)_{g\in G})$$
 associated to
 $$\Theta(G)((\partial_g)_{g\in G})=\prod_{i=1}^s\Phi_i^{f_i}((\partial_g)_{g\in G})$$
 in the case of a non-abelian finite group $G$. From what we have just explained above we deduce 

  
  \begin{theorem}
 \label{sol}
 Given a non-abelian finite group $G$ of order $n\geq 6$ with Frobenius determinant $\Theta(G)$ and associated partial differential operator
 $$\Theta(G)((\partial_g)_{g\in G}),$$
 one can find global non-trivial holomorphic functions in its kernel.
 \end{theorem}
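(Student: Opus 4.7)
The plan is to simply collect the constructions developed in the discussion preceding the statement; the theorem functions as a summary claim, so the work is essentially organizational rather than computational.

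First I would apply \theoref{frobenius1} to write $\Theta(G)((X_g)_{g\in G})=\prod_{i=1}^{s}\Phi_i^{f_i}((X_g)_{g\in G})$. Because the $X_g$ are algebraically independent, the operators $\partial_g$ pairwise commute, so the substituted operators $\Phi_i((\partial_g)_{g\in G})$ commute as well and compose to give $\Theta(G)((\partial_g)_{g\in G})$. Consequently, it suffices to exhibit a non-trivial global holomorphic function in the kernel of a single factor $\Phi_i((\partial_g)_{g\in G})$.

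Next I would use the classical fact recalled earlier (from Conrad 1998) that each irreducible factor $\Phi_i$ can be written, after a linear change to algebraically independent coordinates $(z_{jl})_{1\leq j,l\leq f_i}$, as $\det(z_{jl})$. Such a linear change of coordinates transforms the operator $\Phi_i((\partial_g)_{g\in G})$ into the Cayley operator $\Omega=\det(\partial_{z_{jl}})$, and biholomorphically identifies the kernels. Hence the task reduces to constructing a single non-trivial holomorphic function on $\C^{f_i^2}$ annihilated by $\Omega$.

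For the final step I would use any of the three constructions already isolated in the text: (a) take any singular matrix $A\in M_{f_i}(\C)$ and any polynomial $q$ with $A\star q\not\equiv 0$ (for instance $q=z_{11}$ with a nonzero first column for $A$); the intertwining identity $\Omega(A\star q)=\det(A)(A\star \Omega(q))=0$ produces a non-trivial element of $\ker\Omega$. (b) Apply any positive power $\Delta_{jl}^{r}$ (with $j\neq l$) to a nonzero holomorphic function of the row variables $(z_{l1},\ldots,z_{lf_i})$; commutation of $\Omega$ with $\Delta_{jl}^{r}$ together with the column expansion of $\det(Y)$ kills the result. (c) Pick any nonzero point $(\beta_g)_{g\in G}$ on the affine hypersurface $\Theta(G)((x_g)_{g\in G})=0$, which is nonempty since $\Theta(G)$ is a nonconstant homogeneous polynomial (of degree $n\geq 6$), and then for any nonzero entire $F\colon\C\to\C$ the chain rule shows that $F(\sum_g\beta_g x_g)$ lies in $\ker\Theta(G)((\partial_g)_{g\in G})$.

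There is no genuine obstacle; the only mild issue is ensuring that the exhibited functions are not identically zero, which is handled by the explicit choices above. The only ingredient that is not purely elementary is the existence of the "good" coordinates $(z_{jl})$ in which $\Phi_i$ becomes an ordinary determinant, but this is an input quoted from the cited literature and is already used in the preceding discussion.
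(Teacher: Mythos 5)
Your proposal is correct and follows essentially the same route as the paper: the theorem there is stated as a summary of the immediately preceding discussion, which consists precisely of the Frobenius factorization, the reduction (via commutativity of the $\partial_g$ and the ``good coordinates'' $z_{jl}$) to the Cayley operator $\Omega=\det(\partial_{z_{jl}})$, and the same three constructions (the intertwining identity $\Omega(A\star q)=\det(A)(A\star\Omega(q))$ with $A$ singular, the polarization operators $\Delta_{jl}^{r}$, and $F(\sum_g\beta_g x_g)$ for $(\beta_g)$ on the zero locus of $\Theta(G)$). Your added remarks on verifying non-triviality of the exhibited functions are a small but welcome sharpening of what the paper leaves implicit.
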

 \begin{example}
 \label{john}
 We study the Frobenius determinant of the symmetric group $\mathscr S_3$, the first non-abelian finite group. We relate it to the ultrahyperbolic equation of John and to the classical Gauss hypergeometric function. We enumerate the elements of $\mathfrak S_3$ as follows:
 $$\pi_1=(1),\,\pi_2=(123),\,\pi_3=(132),\,\pi_4=(23),\,\pi_5=(13),\,\pi_6=(12).$$
 Set $X_i=X_{\pi_i}$. Then Dedekind calculated
 $$\Theta(\mathfrak S_3)=\Phi_1\Phi_2\Phi_3^2,$$
 where
\begin{equation*}
\displaystyle
\begin{split}
\Phi_1(X_1,\ldots,X_6)&=X_1+X_2+X_3+X_4+X_5+X_6,\\
\Phi_2(X_1,\ldots,X_6)&=X_1+X_2+X_3-X_4-X_5-X_6,\\
\Phi_3(X_1,\ldots,X_6)&=X_1^2+X_2^2+X_3^2-X_4^2-X_5^2-X_6^2-X_1X_2-X_1X_3-X_2X_3\\
&+X_4X_5+X_4X_6+X_5X_6.
\end{split}
\end{equation*}
If $\omega$ is a primitive cubic root of unity the change of variables is invertible
 \begin{equation*}
 \begin{split}
 u&=X_1+X_2+X_3\\
 v&=X_4+X_5+X_6\\
 u_1&=X_1+\omega X_2+\omega^2X_3\\
 v_1&=X_4+\omega X_5+\omega^2 X_6\\
 u_2&=X_1+\omega^2X_2+\omega X_3\\
 v_2&=X_4+\omega^2X_5+\omega X_6,
 \end{split}
 \end{equation*}
 since its determinant is $\displaystyle \left((1-\omega^2)(\omega-\omega^2)\right)^2\not=0$. We write the factorization of $\Theta(\mathfrak S_3)$ as
 $$\Theta( \mathfrak S_3)(X_1,\cdots,X_6)=(u+v)(u-v)(u_1u_2-v_1v_2)^2.$$
We set $\alpha_1=u_1$, $\beta_2=u_2$, $\beta_1=v_1$ and $\alpha_2=v_2$. Then we have
 $$\Theta(\mathfrak S_3)(X_1,\cdots,X_6)=(u+v)(u-v)(\alpha_1\beta_2-\beta_1\alpha_2)^2.$$
 The associated differential operator is
 $$(\partial_u+\partial_v)(\partial_u-\partial_v)(\partial_{\alpha_1}\partial_{\beta_2}-\partial_{\alpha_2}\partial_{\beta_1})^2,$$
 as previously explained, and $(\partial_u+\partial_v)(\partial_u-\partial_v)$ is the two-dimensional wave operator. We look for generalized solutions. It suffices to know a solution of some factor of the operator. The main point of the discussion is that all solutions of 
 $$\Omega_4=\partial_{\alpha_1}\partial_{\beta_2}-\partial_{\alpha_2}\partial_{\beta_1}=0$$
 in $S(\R^4)$ are known explicitly (\cite[p.~46]{gelfand2000}, \cite{john1938}). They are given by the John's transforms of functions belonging to the Schwartz space $S(\R^3)$, defined by
 \begin{equation}
 \label{f90}
 \displaystyle\psi(\alpha_1,\alpha_2,\beta_1,\beta_2)=\int_{-\infty}^{+\infty}f(\alpha_1x_3+\beta_1,\alpha_2x_3+\beta_2,x_3)dx_3,\,f\in S(\R^3).
 \end{equation}
The Gauss hypergeometric function can be expressed as a John's transform, \cite[p.~46]{gelfand2000}. Indeed one can define the John transform \eqref{f90} whenever the integral involved makes sense. In particular, the John transform $\psi_\lambda$ of the following function $f_\lambda$ on $\R^3$ is well-defined:
 $$f_\lambda(x_1,x_2,x_3)=(x_1)_+^{\lambda_1-1}(x_2)_+^{\lambda_2-1}(x_3)_+^{\lambda_3-1},\quad \lambda=(\lambda_1,\lambda_2,\lambda_3)\in \C^3$$
 where $t^{a}_+=t^{a}$ if $t\bg 0$ and zero for $t\less 0$. For $\Re(a)\bg 0$ the function $t_+^a$ is defined as a classical function and it extends to a distribution in $t$. We have 
 \[\psi_\lambda( \alpha_1,\alpha_2,\beta_1, \beta_2)= \int_{-\infty}^{+\infty}(x_3\alpha_1+ \beta_1)^{\lambda_1-1}_+( x_3\alpha_2+ \beta_2 )^{\lambda_2-1}_+{x_3}_+^{\lambda_3-1}\;dx_3.\]
 This integral converges if $\Re\lambda_i>0,\,i= 1,2,3$, and $\Re(\lambda_1+\lambda_2+\lambda_3)<2$ and in this domain the integral defines an analytic function of $\lambda_1$, $\lambda_2$, $\lambda_3$. Therefore, the function defined by the integral can be extended by the analytic continuation. Restricting to the domain on which $\alpha_i\less 0\less \beta_i,\, i=1,2$ and setting $x_3=-\dfrac{\beta_2}{\alpha_2}t$ we get 
 $$\displaystyle\psi_\lambda(\alpha_1,\alpha_2,\beta_1,\beta_2)=\beta_1^{\lambda_1-1}\beta_2^{\lambda_2+\lambda_3-1}|\alpha_2|^{-\lambda_3}\int_0^1(1-xt)^{\lambda_1-1}_+(1-t)^{\lambda_2-1}t^{\lambda_3-1}dt,$$
 where $x=\dfrac{\alpha_1\beta_2}{\alpha_2\beta_1}$. We compare this expression to the well-known representation of the Gauss hypergeometric function $F(a,b,c;x)$ for $|x|\less 1$ as the Euler-integral
 $$F(a,b,c;x)=\dfrac{\Gamma\left(c\right)}{\Gamma(b)\Gamma(b-c)}\int_0^1t^{b-1}(1-t)^{c-b-1}(1-xt)^{-a}dt;$$
 which converges for $\Re\left(c\right)\bg \Re(b)\bg 0$, and as function of $x$ extends holomorphically to $\Large\widetilde{\C P_1\setminus\{0,1,\infty\}}$, the universal covering of $\C P_1\setminus\{0,1,\infty\}$. For other values of $b$ and $c$ $F(a,b,c;x)$ must be treated by the analytic continuation with respect to $b$ and $c$. One gets finally \cite[p. 46]{gelfand2000}
 \begin{equation}\label{hypergeometric4}
 \psi_\lambda( \alpha_1,\alpha_2,\beta_1, \beta_2)=\dfrac{\Gamma(\lambda_2) \Gamma(\lambda_3) }{\Gamma(\lambda_2+\lambda_3)}\beta_1^{\lambda_1-1}\beta_2^{\lambda_2+\lambda_3-1}\vert \alpha_2\vert^{-\lambda_3}
 F\left(-\lambda_1+1,\lambda_3, \lambda_2+\lambda_3;\frac{\alpha_1\beta_2}{\alpha_2\beta_1}\right)
 \end{equation} 
where $\dfrac{\alpha_1\beta_2}{\alpha_2\beta_1}<1$. This gives the analytic continuation of $\psi_\lambda$ with respect to its arguments.

The transform given by \eqref{f90} can be treated in more than $4$ variables. Let us restrict for simplicity to the case of $9$ variables and consider the differential operator
$$\Omega_9=\det\left(\begin{array}{ccc}\partial_{\alpha_1} & \partial_{\alpha_2} & \partial_{\alpha_3} \\\partial_{\beta_1} & \partial_{\beta_2} & \partial_{\beta_3} \\\partial_{\gamma_1} & \partial_{\gamma_2} & \partial_{\gamma_3}\end{array}\right).$$
Then the function
\begin{equation*}
\begin{split}
&\varphi(\alpha_1,\alpha_2,\alpha_3,\beta_1,\beta_2,\beta_3,\gamma_1,\gamma_2,\gamma_3)\\
&=\displaystyle\int_{-\infty}^{+\infty}\int_{-\infty}^{+\infty}f(\alpha_1x_2+\beta_1x_3+\gamma_1,\alpha_2x_2+\beta_2x_3+\gamma_2,\alpha_3x_2+\beta_3x_3+\gamma_3,x_2,x_3)dx_2dx_3,
\end{split}
\end{equation*}
with $f\in S(\R^5)$, is in the kernel of $\Omega_9$. 
 \end{example}
 
  \subsection{Eigenvalues problem}
  We investigate in this section the eigenvalue problem for the differential operators associated to the Frobenius determinant of a finite abelian group. We recall that this differential operator can be factorized into
$$\displaystyle\Theta(G)\left((\partial_{g})_{g\in G}\right)=\prod_{\chi\in \hat{G}}\partial_{u_\chi},\quad \partial_{u_\chi}=\dfrac{\partial}{\partial u_\chi}.$$
We are looking for holomorphic solutions $U(u_\chi,\chi\in \hat{G})$ to
\begin{equation}
\label{f5400}
\displaystyle \prod_{\chi\in \hat{G}}\partial_{u_\chi}U=c U,\quad c\in \C^\times.
\end{equation}
We remark that by an elementary change of variables (replacing one $u_\chi$ by $\lambda u_\chi$), the equation \eqref{f5400} is equivalent to the equation
\begin{equation}
\label{f5401}
\displaystyle\prod_{\chi\in \hat{G}}\partial_{u_\chi}U= U.
\end{equation}
Let $n=\sharp{G}$. For ease of notation we identify $(u_\chi)_{\chi\in \hat{G}}$ with $(u_i)_{1\leqslant i\leqslant n}$. Take $f$ to be a holomorphic function on $\C^n$, depending on the variables $\zeta_1,\zeta_2,\ldots,\zeta_n$. Let $\zeta_1^{(0)},\zeta_2^{(0)},\ldots,\zeta_n^{(0)}$ be a point of $\C^n$. We set 
  \begin{equation}
  \label{sibirani1}\begin{split}&\left[f(\zeta_1,\zeta_2,\ldots,\zeta_n)\right]_r:=f(\zeta_1,\zeta_2,\ldots,\zeta_n)\\
  &-f(\zeta_1^{(0)},\zeta_2,\ldots,\zeta_n)-f(\zeta_1,\zeta_2^{(0)},\ldots,\zeta_n)
  -\ldots-f(\zeta_1,\zeta_2,\ldots,\zeta_n^{(0)})\\&+f(\zeta_1^{(0)},\zeta_2^{(0)},\ldots,\zeta_n)+f(\zeta_1^{(0)},\zeta_2,\zeta_3^{(0)},\ldots,\zeta_n)+\ldots+f(\zeta_1^{(0)},\zeta_2,\ldots,\zeta_{n-1},\zeta_n^{(0)})+\\&+f(\zeta_1,\zeta_2^{(0)},\zeta_{3}^{(0)},\zeta_4,\ldots,\zeta_{n-1},\zeta_n)+f(\zeta_1,\zeta_2^{(0)},\zeta_{3},\zeta_4^{(0)},\zeta_5,\ldots,\zeta_n)+\ldots+\\&f(\zeta_1,\zeta_2^{(0)},\zeta_{3},\ldots,\zeta_n^{(0)})+\ldots+f(\zeta_1,\zeta_2,\ldots,\zeta_{n-2}^{(0)},\zeta_{n-1},\zeta_n^{(0)})+f(\zeta_1,\zeta_2,\ldots,\zeta_{n-1}^{(0)},\zeta_n^{(0)})\\
  &+\ldots+(-1)^rf(\zeta_1^{(0)},\zeta_2^{(0)},\ldots,\zeta_r^{(0)},\ldots,\zeta_n)+\ldots+(-1)^rf(\zeta_1,\zeta_2,\ldots,\zeta_{n-r},\zeta_{n-r+1}^{(0)},\ldots,\zeta_n^{(0)}),
  \end{split}
  \end{equation}  
$\,r\leqslant n$. Let $x_1,x_2,\ldots,x_n$ be $n$ independent variables and $(x_1^{(0)},x_2^{(0)},\ldots,x_n^{(0)})$ a fixed point of $\C^n$. We denote by $\varphi_{(h)}$ a function of the $n-1$ variables $x_1,x_2,\ldots,x_{h-1},x_{h+1},\ldots,x_n$.
 
 Let $\varphi_{(1)}$, $\varphi_{(2)},\ldots,$$\varphi_{(n)}$ be the $n$ functions satisfying to all the following $n(n-1)$ conditions
 \begin{equation}
 \label{sibirani2}
 (\varphi_{(h)})_{x_l=x_l^{(0)}}=(\varphi_{(l)})_{x_h=x_h^{(0)}},\,\,h\not=l,\,h,l=1,2,\ldots,n. 
 \end{equation}
 Then the function 
 \begin{equation}
 \label{sibirani3}
 u=\displaystyle\sum_{h=1}^n\left[\varphi_{(h)}(x_1,x_2,\ldots,x_{h-1},x_{h+1},\ldots,x_n)\right]_{h-1}
 \end{equation} 
 satisfies the conditions 
 \begin{equation}
 \label{sibirani4}
 u_{x_i=x_i^{(0)}}=\varphi_{(i)}, i=1,2,\ldots,n
 \end{equation}
 and
 \begin{equation}
 \label{sibirani4000}
\dfrac{\partial^nu}{\partial x_1\partial x_2\ldots \partial x_n}=0
 \end{equation}
 because the functions $\varphi_{(h)}$, $1\leqslant h\leqslant n$ are functions of $n-1$ variables only. Thus $u$, given by equation \eqref{sibirani3}, is the solution to \eqref{sibirani4000} which satisfies the initial conditions \eqref{sibirani4}. Consider the following equation 
 \begin{equation}
 \label{sibirani5}
 \displaystyle \dfrac{\partial^nw}{\partial x_1\partial x_2\ldots\partial x_n }+w=\lambda(x_1,x_2,\ldots,x_n),
 \end{equation}
 where $\lambda$ is a holomorphic function on $\C^n$. If we set 
 $$\displaystyle w=v+\sum_{h=1}^n\left[\varphi_{(h)}\right]_{h-1}$$
 the equation \eqref{sibirani5} becomes 
 \begin{equation}
 \label{sibirani6}
 \displaystyle \dfrac{\partial^nv}{\partial x_1\partial x_2\ldots\partial x_n }+v=\mathcal G(x_1,x_2,\ldots,x_n) 
 \end{equation}
 with 
 $$\mathcal G(x_1,x_2,\ldots,x_n)=\lambda(x_1,x_2,\ldots,x_n)-\sum_{h=1}^n\left[\varphi_{(h)}\right]_{h-1}.$$
 Then in order to find the solution of equation \eqref{sibirani5} which for $x_i=x_i^{(0)}$ reduces to $\varphi_{(i)}$, $(i=1,2,\ldots,n)$, it suffices to find the solution of \eqref{sibirani6} which vanishes for $x_i=x_i^{(0)}$. We introduce the following entire function \cite{me}
 \begin{equation}
\label{f5402}
\displaystyle E_n(z)=\sum_{m\geqslant 0}(-1)^m\dfrac{\left(\dfrac{z}{n}\right)^{nm}}{(m!)^n},\quad n\geqslant2.
\end{equation}
and set 
$$\displaystyle F_n(x_1,x_2,\ldots,x_n)=E_{n}(n\sqrt[n]{x_1x_2\ldots x_n})=\sum_{m\geqslant0}\dfrac{(-1)^mx_1^mx_2^m\ldots x_n^m}{(m!)^n}.$$
One easily sees that $F_n(x_1,x_2,\ldots,x_n)$ satisfies the equation
\begin{equation}
\label{sibirani7}
 \displaystyle \dfrac{\partial^nv}{\partial x_1\partial x_2\ldots\partial x_n }+v=0
\end{equation}
and takes the value $1$ for $x_i=0,\,i=1,2,\ldots,n$. Let then
$$F_n(x_1-\alpha_1,x_2-\alpha_2,\ldots,x_n-\alpha_n)$$
be the solution to equation \eqref{sibirani7} which takes the value $1$ when $x_i=\alpha_i,\,i=1,2,\ldots,n$. Let us consider the function
\begin{equation}
\label{sibirani8}
\displaystyle
\begin{split}
v(x_1,\ldots,x_n)&=\int_{x_1^{(0)}}^{x_1}d\alpha_1\int_{x_2^{(0)}}^{x_2}d\alpha_2\\
&\ldots\int_{x_n^{(0)}}^{x_n}\mathcal G(\alpha_1,\alpha_2,\ldots,\alpha_n)F_n(x_1-\alpha_1,x_2-\alpha_2,\ldots,x_n-\alpha_n)d\alpha_n.
\end{split}
\end{equation}
We have 
\begin{equation*}
\begin{split}
\displaystyle \dfrac{\partial^nv}{\partial x_1\partial x_2\ldots\partial x_n }&=\int_{x_1^{(0)}}^{x_1}\int_{x_2^{(0)}}^{x_2}\ldots\int_{x_n^{(0)}}^{x_n}\mathcal G(\alpha_1,\alpha_2,\ldots,\alpha_n)\dfrac{\partial^nF_n}{\partial x_1\partial x_2\ldots\partial x_n }d\alpha_n\\
&+\mathcal G(x_1,x_2,\ldots,x_n)
=-v+G(x_1,x_2,\ldots,x_n)\end{split}
\end{equation*}
because $F_n$ satisfies \eqref{sibirani7} and $\left(\dfrac{\partial^jF_n}{\partial x_{i_1}\partial x_{i_2}\ldots\partial x_{i_j}}\right)_{|_{x_i=\alpha_i}}=0$, for $j\geqslant1$. Thus \eqref{sibirani8} is the solution of \eqref{sibirani6} which vanishes for 
$$x_i=x_i^{(0)},i=1,2,\ldots,n.$$
In conclusion if $v$ is the function defined by equation \eqref{sibirani8}, then the solution of the equation \eqref{sibirani5} which for $x_i=x_i^{(0)}$ reduces to $\varphi_{(i)}$ for $i=1,2,\ldots,n$ is 
$$\displaystyle u=v+\sum_{h=1}^n\left[\varphi_{(h)}\right]_{h-1}.$$

 \section{Frobenius Groups, Almost-Frobenius structures and toric structures}
 \label{unitsphere}
 \subsection{Frobenius Groups }
 Let $G$ be a finite group of order $n\geq 2$. We recall that to each such group and variables $(a_g)_{g\in G}$, we associated a matrix $M=(a_{gh^{-1}})_{g,h}$ whose determinant is the group determinant $\Theta(G)$. Let us consider the $(a_{gh^{-1}})_{g,h}$ as complex numbers and also the set $\mathbb{S}_G$ of matrices of Frobenius type for a fixed group $G$ with non-zero determinant. We call every such matrix an invertible Frobenius matrix associated to the finite group $G$. In this section we show that the set $\mathbb{S}_G$ of invertible Frobenius matrices is, when $G$ is abelian, a commutative linear algebraic group and a linear reductive group when $G$ is non-abelian.
 \begin{theorem} 
 \label{fro}
 The set $\mathbb{S}_G$ of invertible Frobenius matrices associated to a given finite group $G$ of order $n$ is a connected linear algebraic group. It is linear reductive when $G$ is non-abelian and is commutative when $G$ is abelian. Let $\mathfrak{s}_G$ be the Lie algebra of $\mathbb{S}_G$. If $G$ is non-abelian then the derived Lie algebra $\mathfrak{s}_G^{(1)}$ of $\mathfrak{s}_G$ is semi-simple.
 \end{theorem}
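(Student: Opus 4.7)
The plan is to identify $\mathbb{S}_G$ with the unit group of the group algebra $\mathbb{C}[G]$ realized through the left regular representation, and then apply the Wedderburn--Maschke structure theorem. Concretely, consider the injective algebra homomorphism
\[
\rho\colon \mathbb{C}[G]\longrightarrow M_n(\mathbb{C}),\qquad \sum_{g\in G} a_g\,g\;\longmapsto\;(a_{gh^{-1}})_{g,h},
\]
which is just the matrix of left multiplication in the basis $G$. Its image is precisely the linear subspace of Frobenius matrices, and $\rho$ restricts to a group isomorphism from $\mathbb{C}[G]^\times$ onto $\mathbb{S}_G$. Since $\rho$ is linear in the coordinates $(a_g)_{g\in G}$ and $\mathbb{S}_G$ is cut out inside this $n$-dimensional linear space by the open condition $\det M\neq 0$, this already shows that $\mathbb{S}_G$ is a linear algebraic group; closure under multiplication is automatic because $\rho$ is an algebra map (the matrix product of two Frobenius matrices corresponds to convolution in $\mathbb{C}[G]$).

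Next, I invoke Maschke's theorem, which gives a $\mathbb{C}$-algebra isomorphism
\[
\mathbb{C}[G]\;\cong\;\bigoplus_{i=1}^{s} M_{f_i}(\mathbb{C}),
\]
where $s$ is the number of conjugacy classes of $G$ and the $f_i$ are the dimensions of the irreducible complex representations (these are the same integers as in \theoref{frobenius1}). This isomorphism is an isomorphism of algebraic $\mathbb{C}$-schemes, hence passes to units and produces an isomorphism of algebraic groups
\[
\mathbb{S}_G\;\cong\;\prod_{i=1}^{s} GL_{f_i}(\mathbb{C}).
\]
From this explicit product description, every assertion of the theorem follows formally: each factor $GL_{f_i}(\mathbb{C})$ is connected in the Zariski topology and reductive, so the product is connected and reductive; when $G$ is abelian all $f_i=1$ and we recover $(\mathbb{C}^\times)^n$, which is a commutative torus; when $G$ is non-abelian at least one $f_i\geq 2$, yielding a nontrivially reductive group.

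For the last assertion, I differentiate the product decomposition at the identity to obtain
\[
\mathfrak{s}_G\;\cong\;\bigoplus_{i=1}^{s} \mathfrak{gl}_{f_i}(\mathbb{C}),
\]
whose derived subalgebra is $\mathfrak{s}_G^{(1)}\cong \bigoplus_{i=1}^{s}\mathfrak{sl}_{f_i}(\mathbb{C})$ (with $\mathfrak{sl}_1=0$). When $G$ is non-abelian some $f_i\geq 2$, and a direct sum of Lie algebras $\mathfrak{sl}_{f_i}(\mathbb{C})$ with $f_i\geq 2$ is a direct sum of simple Lie algebras, hence semi-simple.

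There is no deep obstacle here; the one point requiring a little care is the identification of the matrix realization $\rho(\mathbb{C}[G])$ inside $M_n(\mathbb{C})$ with the space of Frobenius matrices defined by the combinatorics $(a_{gh^{-1}})_{g,h}$ (checking that the convention for rows and columns matches the definition of $\Theta(G)$ used in \theoref{dedekind} and \theoref{frobenius1}). Once this bookkeeping is fixed, the Wedderburn decomposition supplies both the structural statements and the explicit form of $\mathfrak{s}_G^{(1)}$.
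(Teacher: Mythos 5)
Your proposal is correct, and it overlaps with the paper's argument in one key ingredient, but it organizes the proof quite differently. The paper also identifies $\mathbb{S}_G$ with the unit group of $\mathbb{C}[G]$ and invokes the Wedderburn decomposition $\mathbb{C}[G]\simeq\prod_j M_{n_j}(\mathbb{C})$ --- but only at the very end, and only to obtain connectedness and reductivity. Everything else is done by hand: the group law is verified by the convolution computation and an explicit adjoint-matrix formula for the inverse; commutativity in the abelian case is checked directly on matrix entries; and, most substantially, the Lie algebra $\mathfrak{s}_G$ is computed by epsilonization as a concrete algebra of vector fields $X_k=\sum_p x_{pk}\,\partial/\partial x_p$ with bracket $[X_k,X_l]=X_{lk}-X_{kl}$. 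From this the paper identifies the center $\mathfrak{h}$ with the class functions (dimension $r$, the number of conjugacy classes), shows $\dim\mathfrak{s}_G^{(1)}=n-r$ and $\mathfrak{s}_G=\mathfrak{h}\oplus\mathfrak{s}_G^{(1)}$, and then deduces semisimplicity of $\mathfrak{s}_G^{(1)}$ abstractly from $\mathrm{rad}\,\mathfrak{s}_G=\mathfrak{h}$ together with that direct-sum decomposition. You instead push the Wedderburn isomorphism all the way through: $\mathbb{S}_G\cong\prod_i GL_{f_i}(\mathbb{C})$ as algebraic groups, hence $\mathfrak{s}_G\cong\bigoplus_i\mathfrak{gl}_{f_i}(\mathbb{C})$ and $\mathfrak{s}_G^{(1)}\cong\bigoplus_i\mathfrak{sl}_{f_i}(\mathbb{C})$, which is visibly semisimple (and nonzero) once some $f_i\geq 2$. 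Your route is shorter and makes every claim in the statement an immediate corollary of a single structural isomorphism; the paper's route is longer but produces concrete data that your argument does not, namely the explicit vector-field generators of $\mathfrak{s}_G$ and the identification of its center and derived subalgebra in terms of the conjugacy classes of $G$. One cosmetic point: the decomposition you use is the Artin--Wedderburn decomposition (Maschke's theorem supplies only the semisimplicity of $\mathbb{C}[G]$), and you should note, as you implicitly do, that in a finite-dimensional algebra invertibility of left multiplication by $a$ is equivalent to $a$ being a unit, so that invertible Frobenius matrices correspond exactly to $\mathbb{C}[G]^{\times}$.
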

 \begin{proof}
 We first show that $\mathbb{S}_G$ is a linear algebraic group. For that we need to show that the product of two invertible Frobenius matrices is a Frobenius matrix and also that the inverse of a Frobenius is a Frobenius matrix. Let $(a_g)_{g\in G}$ and $(b_g)_{g\in G}$ two set of variables. Let $(x)=(a_{gh^{-1}})_{g,h}$ and $(y)=(b_{gh^{-1}})_{g,h}$ be two invertible Frobenius matrixes. The following computation is classical we give it here for the sake of completeness. We introduce $c_t$ satisfying
 \begin{equation}
 \label{f58}
 \displaystyle c_t=\sum_{(u,v):uv=t} a_uv_v\equiv\sum_va_ub_v.
 \end{equation}
 The product of the two matrices $(x)$ and $(y)$ is
 \begin{equation}
 \label{f59}
 \displaystyle(x)(y)=(z)=(z_{g,h})=(\sum_ra_{gr^{-1}}b_{rh^{-1}}).
 \end{equation}
 Set $rh^{-1}=s$. Then we get
\begin{equation}
\label{f60} 
\displaystyle z_{g,h}=\sum_sa_{gh^{-1}s^{-1}}b_s=c_{gh^{-1}}.
\end{equation}
 Hence $(z)=(c_{gh^{-1}})_{g,h}$. Let $(x)=(a_{gh^{-1}})_{g,h}$ be an invertible Frobenius matrix with determinant $D_{(x)}$ then it follows from \cite{conrad1998}, that the adjoint matrix of $(x)$ is given by $$A_{(x)}=\left(\dfrac{1}{n}\dfrac{\partial D_{(x)}}{\partial X_{hg^{-1}}}\right)_{g,h}.$$
 We set $U_w=\dfrac{1}{nD_{(x)}}\dfrac{\partial D_{(x)}}{\partial X_w}$. Then we have
 $$(x)^{-1}=U_{gh^{-1}}.$$
 This proves that $\mathbb{S}_G$ is a group. Moreover by definition of $\mathbb{S}_G$ the rows of any element of $\mathbb{S}_G$ are gotten from a fixed row by a permutation. This proves that $\mathbb{S}_G$ is a linear algebraic group. We next prove that $\mathbb{S}_G$ is abelian when $G$ is abelian. With the same notation as above, let $(x)$ and $(y)$ be two invertible Frobenius for $G$ with $G$ abelian. Then using calculations akin to those used in equations \eqref{f59} and \eqref{f60}, we get for the $(g,h)$ element of $(y)(x)$ the following formula
 \begin{equation}
 \label{f61}
 \displaystyle z^\prime_{g,h}=\sum_f a_fb_{gh^{-1}f^{-1}}.
 \end{equation} 
 Hence when $G$ is abelian, in accordance with equation \eqref{f58}, we have 
 $$z_{p,q}=z^\prime_{p,q},$$
 i.e $\mathbb{S}_G$ is abelian. Finally let us show that $\mathbb{S}_G$ is reductive. This amounts to show that its Lie algebra is reductive, because $\mathbb{S}_G$ is linear algebraic over the field of complex numbers which is algebraically closed. Firstly let us determine the Lie algebra $\mathfrak{s}_G$ of $\mathbb{S}_G$. To this end we use the process of epsilonization. Let $GL(n,\C[\varepsilon])$ be the general linear group with coefficients in $\C[\varepsilon]=\C\oplus\C\varepsilon,\,\varepsilon^2=0$ and $\mathbb{S}_G(\C[\varepsilon])$ the subgroup of $GL(n,\C[\varepsilon])$ consisting of those elements in $GL(n,\C[\varepsilon])$ which satisfy the same polynomial equations as the elements of $\mathbb{S}_G$. Then one has that
 
 $$Lie(\mathbb{S}_G)=\{A\in M_n(\C)|I_n+\varepsilon A\in\mathbb{S}_G(\C[\varepsilon])\}$$ is by definition the Lie algebra $\mathfrak{s}_G$ of $\mathbb{S}_G$. Let us enumerate the elements of $G$ by $S_1=e$, $S_2,\ldots,S_n$. The multiplication table of $G$ is given by
 \begin{equation}
\label{f62}
 \left[\begin{array}{cccccc}S_1 & S_2 & \ldots & S_k & \ldots & S_n \\S_2^{-1}S_1 & S_1 & \ldots & S_2^{-1}S_k & \ldots & S_2^{-1}S_n \\\ldots & \ldots & \ldots & \ldots & \ldots & \ldots \\\ldots & \ldots & \ldots & \ldots & \ldots & \ldots \\S_l^{-1}S_1 & S_l^{-1}S_2 & \ldots & S_l^{-1}S_k & \ldots & S_l^{-1}S_n \\\ldots& \ldots& \ldots & \ldots & \ldots & \ldots \\S_n^{-1}S_1 & S_n^{-1}S_2 & \ldots & S_n^{-1}S_k & \ldots & S_1\end{array}\right].
 \end{equation}
 This multiplication table serves as bookkeeping in order to define the transformations \eqref{f63}. We let the group $\mathbb{S}_G$ act on $\C^n$ with coordinates $(x_{S_1},x_{S_2},\ldots,x_{S_n})$ via the following transformations
 \begin{equation}
 \label{f63}
 \displaystyle
 \begin{split}
 x_{S_1}^\prime&=Y_{S_1}x_{S_1}+Y_{S_2}x_{S_2}+\ldots+Y_{S_k}x_{S_k}+\ldots+Y_{S_n}x_{S_n}\\
 x_{S_2}^\prime&=Y_{S_2^{-1}S_1}x_{S_1}+Y_{S_1}x_{S_2}+\ldots+Y_{S_2^{-1}S_k}x_{S_k}+\ldots+Y_{S_2^{-1}S_n}x_{S_n}\\
& \ldots\qquad\ldots\qquad\ldots\qquad\ldots\qquad\ldots\qquad\ldots\qquad\ldots\\
 x_{S_l}^\prime&=Y_{S_l^{-1}S_1}x_{S_1}+Y_{S^{-1}_lS_2}x_{S_2}+\ldots+Y_{S_l^{-1}S_k}x_{S_k}+\ldots+Y_{S_l^{-1}S_n}x_{S_n}\\
 & \ldots\qquad\ldots\qquad\ldots\qquad\ldots\qquad\ldots\qquad\ldots\qquad\ldots\\
 x_{S_n}^\prime&=Y_{S_n^{-1}S_1}x_{S_1}+Y_{S^{-1}_nS_2}x_{S_2}+\ldots+Y_{S_n^{-1}S_k}x_{S_k}+\ldots+Y_{S_1}x_{S_n}
 \end{split}
 \end{equation} 
where $(Y_{S_j^{-1}S_l})_{j,l}$ is an invertible Frobenius matrix. The infinitesimal transformations which correspond to the Lie algebra $\mathfrak{s}_G$ are generated by $I_n+\varepsilon A_k$ where
$$A_1=I_n$$
and
 \begin{equation*}
 \label{f64}
 A_k\left(\begin{array}{c}x_{S_1} \\x_{S_2} \\\vdots \\x_{S_n}\end{array}\right)=\left(\begin{array}{c}x_{k_1} \\x_{k_2} \\\vdots \\x_{k_n}\end{array}\right),\,k= 2,\ldots,n.
 \end{equation*}
More precisely one defines $A_k$ for $k=2,\ldots,n$ as follows. One sets in \eqref{f63}
$$Y_{S_1}=1,Y_{S_2}=Y_{S_3}=\ldots=Y_{S_{k-1}}=Y_{S_{k+1}}=\ldots=Y_{S_n}=0,Y_{S_k}=\varepsilon$$
and gets $I_n+\varepsilon A_k$ for $k= 2,\ldots,n$, with
 \begin{equation*}
 \label{f64}
 A_k\left(\begin{array}{c}x_{S_1} \\x_{S_2} \\\vdots \\x_{S_n}\end{array}\right)=\left(\begin{array}{c}x_{k_1} \\x_{k_2} \\\vdots \\x_{k_n}\end{array}\right),\,k= 2,\ldots,n.
 \end{equation*}
Remark that $A_k$ is a permutation matrix and $x_{k_1}=x_k$. We denote $x_{k_p}$ the coefficient of $\varepsilon$ in $x^\prime_{S_p}$. Thus the Lie algebra $\mathfrak{s}_G$ is generated as a matrix algebra by $I_n$, $(A_k)_{k=2,\ldots,n}$. In terms of vector fields we have the following generators of $\mathfrak{s}_G$. 
 \begin{equation}
 \label{f65}
 \begin{split}
 X_{S_1}&=x_{S_1}\dfrac{\partial}{\partial x_{S_1}}+x_{S_2}\dfrac{\partial}{\partial x_{S_2}}+\ldots+x_{S_n}\dfrac{\partial}{\partial x_{S_n}}\\
 X_{S_k}&=x_{k_1}\dfrac{\partial}{\partial x_{S_1}}+x_{k_2}\dfrac{\partial}{\partial x_{S_2}}+\ldots+x_{k_n}\dfrac{\partial}{\partial x_{S_n}},\,k=2,\ldots,n.
 \end{split}
 \end{equation}
 These vector fields act on the coordinate ring $$\C\left[\mathbb{S}_G\right]=\C\left[\left(x_g\right)_{g\in G},y\right]/\left(\det(x_{gh^{-1}})y-1\right).$$
 Since 
 $$x_{S_p}^\prime=Y_{S_p^{-1}S_1}x_{S_1}+Y_{S^{-1}_pS_2}x_{S_2}+\ldots+Y_{S_p^{-1}S_k}x_{S_k}+\ldots+Y_{S_p^{-1}S_n}x_{S_n}$$
 it follows that $x_{K_p}=x_{S_q}$ if $S_p^{-1}S_q=S_k$ or again if $S_q=S_pS_k$. This suggests the notation
$$x_{k_p}=x_{S_q}=x_{S_pS_k}=x_{pk},\,$$
for $k\geqslant2$ and $S_p^{-1}S_q=S_k$. In a similar fashion we set $x_{S_j}=x_j$, $j=1,2,\ldots,n$ in the vector field $X_{S_1}$. Finally we denote $X_{S_k}$ simply by $X_k$ and $X_{S_kS_j}$ by $X_{kj}$, thus we have
 \begin{equation}
 \label{f66}
 \begin{split}
\displaystyle X_{1}&=x_1\dfrac{\partial}{\partial x_1}+x_2\dfrac{\partial}{\partial x_2}+\ldots+x_n\dfrac{\partial}{\partial x_n}\\
X_k&=x_{1k}\dfrac{\partial}{\partial x_1}+x_{2k}\dfrac{\partial}{\partial x_2}+\ldots+x_{nk}\dfrac{\partial}{\partial x_n},\,k\geq 2.
 \end{split}
 \end{equation}
 Hence the Lie-bracket of any two $X_k$, $X_l$ is given by
 \begin{equation}
 \label{f67}
 [X_k,X_l]=X_{lk}-X_{kl}.
 \end{equation}
 Indeed the coefficient of $x_p$ in $X_l$ is $\dfrac{\partial}{\partial x_q}$ if
 $$p=ql\iff q=pl^{-1}$$
 and therefore the only term with a first order differential coefficient in $x_{pk}\dfrac{\partial}{\partial x_p}X_l$is 
 $$x_{pk}\dfrac{\partial}{\partial x_{pl^{-1}}}.$$
 Hence
 $$\left[X_k,X_l\right]=\sum_{p=1}^n\left(x_{pk}\dfrac{\partial}{\partial x_{pl^{-1}}}-x_{pl}\dfrac{\partial}{\partial x_{pk^{-1}}}\right).$$
 If we set $pl^{-1}=t$ and define $t$ by $pk=tx$ then $x=t^{-1}pk=lk.$ Hence 
 $$\sum_{p=1}^nx_{pk}\dfrac{\partial}{\partial x_{pl^{-1}}}=\sum_{t=1}^nx_{tlk}\dfrac{\partial}{\partial x_t}=X_{lk}.$$
 Similarly
 $$\sum_{p=1}^nx_{pl}\dfrac{\partial}{\partial x_{pk^{-1}}}=X_{kl}.$$
 Hence $[X_k,X_l]=X_{lk}-X_{kl}$.

 Let us determine the center $\mathfrak{h}$ of $\mathfrak{s}_G$. To this end we consider $Y=\displaystyle\sum_1^ne_sX_s\in\mathfrak{s}_G$; then $Y$ commutes with every element of $\mathfrak{s}_g$ if and only if it commutes with every $X_k,\,k=1,\ldots,n$. This means
 \begin{equation}
 \label{f68}
 \begin{split}
 \displaystyle[X_t,Y]&=\sum_1^ne_s[X_t,X_s]\\
&=\sum_1^ne_s(X_{st}-X_{ts})\\
&=\sum_1^n(e_{pt^{-1}}-e_{t^{-1}p})X_p\\
&=0.
\end{split}
 \end{equation}
 Hence, for every value of $p$ and $t$,
 $$e_{pt^{-1}}-e_{t^{-1}p}=0$$
 or
 \begin{equation}
 \label{f69}
 e_q-e_{t^{-1}qt}=0,
 \end{equation}
 for all values of $q$ and $t$. Therefore $Y$ belongs to the center $\mathfrak{h}$ of $\mathfrak{s}_G$ if and only if those coefficients $e_s$ whose suffixes from a conjugate class (with respect to the previously indicated inner automorphisms), are all equal. Moreover it follows from the previous analysis that the dimension of $\mathfrak{h}$ is $r$, where $r$ is the number of distinct conjugate classes in $G$.
 
We consider the derived Lie algebra $\mathfrak{s}_G^{(1)}$ of $\mathfrak{s}_G$, i.e. the ideal of $\mathfrak{s}_G$ generated by the Lie brackets $[X_k,X_l]$.
 
 If $q$ and $t^{-1}qt$ are any two conjugate operations of $G$, then 
 \begin{equation}
\label{f70}
\begin{split}
X_q-X_{t^{-1}qt}&=X_{qtt^{-1}}-X_{t^{-1}qt}\\
&=[X_{t^{-1}},X_{qt}].
\end{split}
\end{equation} 
Hence the generating operations of $\mathfrak{s}_G^{(1)}$ consist of such of the operations 
$$X_q-X_{t^{-1}qt}$$
as are linearly independent.

Suppose that $a_1,\ldots,a_s$ is a class of distinct conjugate operations of $G$ (under inner automorphisms of $G$). Then the $s-1$ operations 
$$X_{a_1}-X_{a_2},\,\,X_{a_1}-X_{a_3},\ldots,\,\,X_{a_1}-X_{a_s}$$
are linearly independent, and by a linear combination of them every operation
$$X_{a_p}-X_{a_q}\quad (p,q=1,2,\ldots,s)$$
can be expressed.

Hence each class of distinct conjugate operations of $G$ gives a set, one less in number, of linearly independent operations which must appear among the generating operations of $\mathfrak{s}_G^{(1)}$. If then $r$ still denotes the number of distinct classes of conjugate operations in $G$, $\mathfrak{s}_G^{(1)}$ is of dimension $n-r$. Moreover since the operation (which belongs to $\mathfrak{h}$)
$$X_{a_1}+X_{a_2}+\ldots+X_{a_s}$$
can not be represented as a linear combination of 
$$X_{a_1}-X_{a_2},X_{a_1}-X_{a_3},\ldots,X_{a_1}-X_{a_s},$$
we have
\begin{equation}
\label{f71}
\mathfrak{s}_G=\mathfrak{h}\oplus\mathfrak{s}_G^{(1)}.
\end{equation}
When $G$ is abelian we have seen that $\mathbb{S}_G$ is abelian hence $\mathfrak{s}_G^{(1)}=0$. But when $G$ is not abelian we show that $\mathfrak{s}_G^{(1)}$ is semi-simple. We also show that $\mathfrak{s}_G$ is linear reductive. 
We give a proof of the linear reductivity of $\mathbb{S}_G$ which, at the same time, will enable us to show the connectedness of $\mathbb{S}_G$. To this end we use the following characterization of the group algebra $\C[G]$ of $G$ given in \cite[p.~64]{serre1998}
\begin{equation}
\label{f74}
\displaystyle\C[G]\simeq \prod_{j=1}^mM_{n_j}(\C),
\end{equation} 
where $m$ is the order of the set of isomorphism classes of irreducible representations of $G$ and $n_j$ is the degree of the isomorphism class of irreducible representations. Hence the unit group of $\C[G]$ is isomorphic to $\displaystyle\prod_{j=1}^mGL(n_j,\C)$. Let us compute the matrix for left multiplication in $\C[G]$ by an element $\displaystyle\sum_{g\in G}a_gg$. Since
$$\displaystyle\left(\sum_ga_gg\right)h=\sum_ga_{gh^{-1}}g,$$
the matrix for left multiplication by $\sum a_gg$ is $(a_{gh^{-1}})$. Left multiplication by $\sum a_gg$ on the finite dimensional vector space $\C[G]$ is invertible if and only if its matrix in any basis is invertible. One sees that left multiplication by $\sum a_gg$ is invertible if and only if $\sum a_gg$ is a unit of $\C[G]$, because we have an endomorphism between finite dimensional vector spaces. Therefore the set of invertible Frobenius matrices corresponds to the unit group of the group algebra. As the latter group is isomorphic to $\displaystyle\prod_{j=1}^mGL(n_j,\C)$, it is a connected and reductive linear algebraic group. 
 We recall that linear reductivity of $\mathbb{S}_G$ means that the largest solvable ideal of $\mathfrak{s}_G$ is its center $\mathfrak{h}$. That is $\rad\mathfrak{s}_G=\mathfrak{h}$. This implies that $\mathfrak{s}_G^{(1)}$ contains no solvable ideal of $\mathfrak{s}_G$, because $\mathfrak{s}_G$ is the direct sum of $\mathfrak{s}_G^{(1)}$ and $\mathfrak{h}$, and $\rad\mathfrak{s}_G$ by construction contains all solvable ideals of $\mathfrak{s}_G$. If then $\mathfrak{k}$ is a solvable ideal of $\mathfrak{s}_G^{(1)}$, it is also a solvable ideal of the Lie algebra generated by $\mathfrak{s}_G^{(1)}$ and $\mathfrak{h}$, i.e. $\mathfrak{s}_G$ ($\mathfrak{h}$ commutes with every operation of $\mathfrak{s}_G$). And we have seen that $\mathfrak{s}_G^{(1)}$ contains no solvable ideal of $\mathfrak{s}_G$. Therefore $\mathfrak{s}_G^{(1)}$ contains no solvable ideal. Thus $\mathfrak{s}_G^{(1)}$ is semi-simple. 
 \end{proof}
 \begin{remark}
To any complex reductive group, one can associate systems of hypergeometric type called GKZ systems. Therefore we can associate GKZ systems to the Frobenius groups. See \cite{gelfand1989, kapranov1998}.
 \end{remark}
 \subsection{Almost-Frobenius structures}
 For relevant notions on Frobenius algebras and manifolds we refer to \cite{looijenga}. Our goal in this subsection is to prove that there is a structure of almost-Frobenius manifold on the complement in $\C^{n}$ of the hyperplanes defined by the vanishing of the Frobenius determinant in the case of finite abelian group $G$ of cardinal $n$. More precisely we have the
 \begin{theorem}
 \label{almost}
Let $\mathcal{H}=\{z_1=0,z_2=0,\ldots,z_n=0\}$ be the arrangement of hyperplanes determined by the coordinate axes of $\C^n$. Then the complement of the union of its hyperplanes can be endowed with a structure of an almost-Frobenius manifold.
\end{theorem}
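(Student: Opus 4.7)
The plan is to realize the desired almost-Frobenius structure as the standard Dubrovin almost-dual construction attached to a Boolean hyperplane arrangement, and to interpret it in the present setting via the Dedekind--Frobenius factorization (Theorem~\ref{dedekind}). More precisely, writing $n=\sharp G$ and using the linear coordinates $u_\chi = \sum_{g\in G}\chi(g)X_g$ coming from the characters, Theorem~\ref{dedekind} and Lemma~\ref{f51} identify the complement of $\{\Theta(G)=0\}$ with the open set $M=(\mathbb C^\times)^n$ in coordinates $(u_\chi)_{\chi\in\widehat G}$. Relabel these coordinates $(z_1,\dots,z_n)$; the complement of $\mathcal H$ in the statement is then $M$, and the problem becomes: put an almost-Frobenius structure on $(\mathbb C^\times)^n$.

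First I would fix the following data on $M$. Take the flat metric $\eta=\sum_{i=1}^n dz_i\otimes dz_i$, Euler field $E=\sum_i z_i\,\partial_{z_i}$, and prepotential
\begin{equation*}
F(z_1,\dots,z_n)=\tfrac12\sum_{i=1}^n z_i^2\bigl(\log z_i-\tfrac32\bigr),
\end{equation*}
and define the multiplication on tangent vectors from the third derivatives $c_{ijk}=\partial_i\partial_j\partial_k F$ via $\eta(\partial_i\cdot\partial_j,\partial_k)=c_{ijk}$. A direct computation yields $c_{ijk}=\delta_{ij}\delta_{jk}/z_i$, so the product takes the diagonal form $\partial_i\cdot\partial_j=\delta_{ij}z_i^{-1}\partial_i$. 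Because the structure constants are diagonal, commutativity is immediate, and associativity as well as the WDVV equations $c_{ijm}\eta^{m\ell}c_{k\ell p}=c_{kjm}\eta^{m\ell}c_{i\ell p}$ reduce to the obvious identity $\delta_{ij}\delta_{jk}\delta_{k p}/(z_iz_k)=\delta_{kj}\delta_{ji}\delta_{ip}/(z_kz_i)$. Invariance of the product with respect to $\eta$ is also automatic from the definition $\eta(\partial_i\cdot\partial_j,\partial_k)=c_{ijk}$, together with the total symmetry of $c_{ijk}$.

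Next I would check the Euler axiom. The field $E=\sum_i z_i\partial_i$ is conformal for $\eta$ with $\mathcal L_E\eta=2\eta$, and a direct computation gives $\mathcal L_E(\partial_i\cdot\partial_j)=\partial_i\cdot\partial_j-\partial_i\cdot\partial_j=-\,\partial_i\cdot\partial_j$, so the multiplication is homogeneous of degree $-1$ under $E$. Equivalently, $E(F)=2F+\text{quadratic}$ modulo the normalization constant $3/2$ introduced above, placing $F$ in Dubrovin's quasi-homogeneity class with the correct charge. This is exactly the axiom of an almost-Frobenius manifold in which $E$ plays the role of the (non-flat) identity: $E\cdot\partial_i=\sum_jz_j\delta_{ij}z_i^{-1}\partial_i=\partial_i$. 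Hence $(M,\eta,\cdot,E)$ is an almost-Frobenius manifold in the sense of Dubrovin, and pulled back along the linear change of variables $(X_g)\mapsto(u_\chi)$ this gives an almost-Frobenius structure on the complement of $\{\Theta(G)=0\}$, proving the theorem.

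The only real obstacle is the bookkeeping of axioms for ``almost-Frobenius'' (as opposed to Frobenius): one must be careful that $E$, rather than a flat field, is the unit and that the Euler charge is computed correctly. Once the prepotential is chosen in the specific separable form above, all verifications reduce to one-variable identities, so the substance of the proof is contained in exhibiting $F$ and identifying the coordinates coming from Theorem~\ref{dedekind} with the normals to the arrangement $\mathcal H$.
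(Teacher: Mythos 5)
Your construction is correct and produces exactly the structure the paper obtains, but you verify it by a different route. The paper checks the hypotheses of Looijenga's proposition on Dunkl systems: with $g(z,z')=\sum z_iz_i'$ and $\rho_{H_i}(v)=v_ie_i$ one has $\sum_i\rho_{H_i}=\Id$, $\rho_{H_i}\circ\rho_{H_j}=0$ for $i\neq j$ (so the Dunkl commutation property is trivially satisfied), each $\rho_{H_i}$ is $g$-self-adjoint, and $g|_{H_i}$ is non-degenerate; the proposition then delivers the product $X\cdot Y=\sum_i\omega_{H_i}(X)\rho_{H_i}(Y)$, the Euler field as unit, and the potential $\Phi=\sum_i\tfrac12 z_i^2\log z_i$. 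You instead write down that same data explicitly --- your $F$ is $\Phi$ plus a quadratic term, and your product $\partial_i\cdot\partial_j=\delta_{ij}z_i^{-1}\partial_i$ coincides with Looijenga's --- and check WDVV, invariance, and the unit axiom by hand, which is more self-contained and makes the diagonal (hence trivially associative) nature of the algebra transparent; the paper's route buys generality, since the same verification pattern applies to any arrangement with the Dunkl property. Two small points to fix: the line $\mathcal L_E(\partial_i\cdot\partial_j)=\partial_i\cdot\partial_j-\partial_i\cdot\partial_j=-\partial_i\cdot\partial_j$ is not arithmetic ($a-a=0$), and the Euler-degree bookkeeping should be stated in one fixed convention (Looijenga's: the product has $E$-degree $1$ and $g$ has $E$-degree $2$, with $E$ normalized); the quasi-homogeneity identity $E(F)=2F+\tfrac12\sum_iz_i^2$ that you compute is the correct and sufficient substitute, so the slip does not affect the conclusion.
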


Let $V$ be a finite dimensional complex vector space endowed with a non-degenerate complex bilinear form $g:V\times V\to \C$. Let $\mathcal{H}$ be a finite collection of linear hyperplanes in $V$ with the property that for every $H\in\mathcal{H}$ the restriction of $g$ to $H\times H$ is non-degenerate (so that $V$ is the direct sum of $H$ and its $g$-orthogonal complement $H^\bot$, a complex line in $V$) and for every $H\in \mathcal{H}$ a nonzero self-adjoint linear map $\rho_H:V\to V$ with kernel $H$. So $\rho_H$ has the form $\rho_H(v)=\alpha_h(v)\check{\alpha}_H$, where $\alpha_H\in V^\star$ has zero set $H$ and $\check{\alpha}_H\in H^\bot$.
 
 Let $V^\circ=V\setminus\cup_{H\in\mathcal{H}}H$. Every tangent space $T_pV^\circ$ can be identified with $V$ and via this identification we regard $g$ as a nondegenerate symmetric bilinear form $g$ on the holomorphic tangent bundle of $V^\circ$. The Levi-Civita connection of $g$ is the standard one on a vector space (its flat vector fields are the constant vector fields). We define a commutative product on the holomorphic tangent bundle of $V^\circ$ by 
 $$\displaystyle X\cdot Y=\sum_H\omega_H(X)\rho_H(Y),\, \omega_H=\dfrac{d\alpha_H}{\alpha_H}.$$
 For $\lambda\in\C$ we define a connection $\nabla^\lambda$ on the holomorphic tangent bundle of $V^\circ$ as usual:
 $$\displaystyle \nabla^\lambda_X(Y)=\nabla_XY+\lambda X\cdot Y.$$
 \begin{proposition}[\cite{looijenga}]
 If $\sum_H\rho_H$ is the identity transformation, then the Euler vector field $E$ on $V$, characterized by $E_p=(p,p)\in\{p\}\times V=T_pV$, serves as identity element for the above product (but is not flat for $g$); the product has $E$-degree $1$ and $g$ has $E$-degree $2$.
 If the system $(V,g,(\rho_H)_H)$ has the Dunkl property, meaning that for every linear subspace $L\subset V$ of codimension $2$ obtained as an intersection of members of $\mathcal{H}$ the sum $\sum_{H\in\mathcal{H},H\supset L}\rho_H$ commutes with each of its terms, then the product is associative, the connection $\nabla^\lambda$ is flat for every $\lambda\in \C$, and a potential function is given by 
 $$\displaystyle \Phi:=\sum_{H\in\mathcal{H}}\dfrac{g(\check{\alpha}_H,\check{\alpha}_H)}{2\alpha_H(\check{\alpha}_H)}\alpha_H^2\log\alpha_H.$$ 
 \end{proposition}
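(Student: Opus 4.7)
The plan is to prove the proposition in four stages: (i) identity, non-flatness and bidegree of $E$; (ii) vanishing of the curvature-like tensor $R'$; (iii) associativity under the Dunkl hypothesis, which is the main obstacle; and (iv) the explicit potential. When $\sum_H \rho_H = \mathrm{Id}$, the radial Euler field $E_p = p$ satisfies $\omega_H(E)|_p = d\alpha_H(E_p)/\alpha_H(p) = \alpha_H(p)/\alpha_H(p) = 1$ because $\alpha_H$ is linear, so $E \cdot Y = \sum_H \rho_H(Y) = Y$. Non-flatness is immediate from $\nabla_X E = X \neq 0$ in the standard flat connection on $V^\circ$. The bidegree claims follow from the flow $h_t(p) = e^t p$: linearity of $\alpha_H$ gives $h_t^* \omega_H = \omega_H$, while $\rho_H$ is point-independent and $g$ is the constant bilinear form on $V$; a direct Lie derivative computation of $\mu$ and $g$ along $E$ then yields the claimed bidegree.

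The main obstacle is the second part, showing that under the Dunkl hypothesis the product is associative and $\nabla^\lambda = \nabla + \lambda \mu$ is flat for every $\lambda \in \C$. By \propref{frobman} this reduces to three conditions: flatness of $\nabla$ (trivial), vanishing of $R'(X,Y) = [\nabla_X, \mu_Y] - [\nabla_Y, \mu_X] - \mu_{[X,Y]}$, and associativity. Since $R'$ is tensorial it suffices to test it on constant vector fields $X, Y, Z$; using $\nabla_X Z = 0$ and $X[\omega_H(Y)] = -\omega_H(X)\omega_H(Y)$, which follows from $d\omega_H = 0$, one computes $\nabla_X(Y \cdot Z) = -\sum_H \omega_H(X)\omega_H(Y)\rho_H(Z)$, symmetric in $(X,Y)$, so $R'(X,Y) = 0$. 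This step uses only closedness of the logarithmic forms, not the Dunkl hypothesis. For associativity, expanding $(X \cdot Y) \cdot Z - X \cdot (Y \cdot Z)$ on constant vector fields and substituting $\rho_H(W) = \alpha_H(W)\check\alpha_H$ produces a double sum indexed by ordered pairs $(H_1, H_2)$; grouping the pairs whose intersection lies in a fixed codimension-two flat $L$ and applying the standard residue identity for the closed logarithmic forms $\omega_H$ at $L$ reduces each packet to a commutator of the form $[\rho_{H_0}, \sum_{H \supset L}\rho_H]$ with $H_0 \supset L$, and these vanish by the Dunkl hypothesis. Combining the three ingredients via \propref{frobman} yields flatness of $\nabla^\lambda$ for all $\lambda$.

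Finally, for the potential, self-adjointness of $\rho_H$ with respect to $g$ forces $g(\check\alpha_H, W) = c_H\, \alpha_H(W)$ for all $W$, where
\[ c_H = \frac{g(\check\alpha_H, \check\alpha_H)}{\alpha_H(\check\alpha_H)}; \]
indeed, setting $X = \check\alpha_H$ in $g(\rho_H X, Y) = g(X, \rho_H Y)$ gives the identity directly. Hence on constant vector fields,
\[ T(X,Y,Z) = g(X \cdot Y, Z) = \sum_H c_H\, \frac{\alpha_H(X)\alpha_H(Y)\alpha_H(Z)}{\alpha_H}, \]
and a direct threefold directional derivative along constant vector fields of $\tfrac{1}{2}\alpha_H^2 \log \alpha_H$ yields exactly $\alpha_H(X)\alpha_H(Y)\alpha_H(Z)/\alpha_H$, verifying that $\Phi$ is a potential in the sense of \propref{frobman} and completing the proof.
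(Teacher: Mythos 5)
This proposition is stated in the paper with the citation \cite{looijenga} and no proof is given there --- it is used as a black box in the proof of \theoref{almost} --- so there is no internal argument to compare yours against; what you have written is essentially a reconstruction of the standard Dunkl-connection argument from the cited source, and its overall structure is sound. The parts you carry out are correct: $\omega_H(E)=1$ by linearity of $\alpha_H$, hence $E\cdot Y=\sum_H\rho_H(Y)=Y$; the symmetry in $(X,Y)$ of $\nabla_X(Y\cdot Z)=-\sum_H\omega_H(X)\omega_H(Y)\rho_H(Z)$ for constant fields gives $R'=0$ with no Dunkl hypothesis; and the potential checks out because self-adjointness gives $g(\check{\alpha}_H,\cdot)=\frac{g(\check{\alpha}_H,\check{\alpha}_H)}{\alpha_H(\check{\alpha}_H)}\,\alpha_H$ (here you should note that $\alpha_H(\check{\alpha}_H)\neq 0$ precisely because $g|_{H\times H}$ is nondegenerate, so $H\cap H^{\bot}=0$) while $\partial_X\partial_Y\partial_Z\bigl(\tfrac12\alpha_H^2\log\alpha_H\bigr)=\alpha_H(X)\alpha_H(Y)\alpha_H(Z)/\alpha_H$.

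The one place where your text names a proof rather than gives one is the associativity step, which is the real content of the proposition. Writing $\Omega=\sum_H\omega_H\otimes\rho_H$, associativity of the commutative product amounts to $\Omega\wedge\Omega=\tfrac12\sum_{H_1\neq H_2}\omega_{H_1}\wedge\omega_{H_2}\otimes[\rho_{H_1},\rho_{H_2}]=0$, and after grouping by $L=H_1\cap H_2$ you invoke ``the standard residue identity'' without stating it. What is actually needed, for each codimension-two flat $L$, is that $\Omega_L\wedge\Omega_L$ with $\Omega_L=\sum_{H\supset L}\omega_H\otimes\rho_H$ is pulled back from the two-dimensional quotient $V/L$, that its residue along a given $H\supset L$ equals $\omega|_H\otimes[\rho_H,\sum_{H'\supset L}\rho_{H'}]$ because all the restrictions $\omega_{H'}|_H$ coincide (the functionals $\alpha_{H'}|_H$ all cut out $L$ in $H$, hence are proportional), and that a logarithmic $2$-form on $\C^2$ with coefficients homogeneous of degree $-2$ and vanishing residues is zero; alternatively one can use the Orlik--Solomon/Arnold relations among the $\omega_{H_1}\wedge\omega_{H_2}$ for $H_i\supset L$. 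Either route should be written out, since as it stands the sentence asserts the conclusion. A smaller point: the bidegree of the product is likewise only asserted, and since it is a pure bookkeeping statement depending on Looijenga's degree conventions you should either carry out the Lie-derivative computation explicitly or leave that clause as quoted from \cite{looijenga}.
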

 We apply this proposition to the case of the hyperplanes given by the vanishing of the Frobenius determinant in the case of a finite abelian group of order $n$. We recall (see \theoref{dedekind}) that we have the following factorization of this determinant:
$$\displaystyle \Theta(G)(X_{g}):=\det(X_{gh^{-1}})=\prod_{\chi\in \hat{G}}(\sum_{g\in G}\chi(g)X_g).$$
Under a change variable (see Section \ref{frobenius}) one can transform it into $\prod_{\chi\in \hat{G}}z_\chi$ which we identify with $\prod_{i=1}^nz_i$.

\begin{prooof}
Let $g:\C^n\times\C^n\to \C$ such that $\displaystyle g(z,z^\prime)=\sum_{i=1}^nz_iz_i^\prime$, for $z=(z_1,z_2,\ldots,z_n)$ and $z^\prime=(z_1^\prime,z_2^\prime,\ldots,z_n^\prime)$. Let us denote by $H_i$ the hyperplane $z_i=0$. Let us first of all show that $g|_{H_i}$ is non-degenerate. For $u=(z_1,z_2,\ldots,z_{i-1},0,z_{i+1},z_{i+2},\ldots,z_n)\in H_i\setminus\{0\}$, there exists $j\in\{1,\ldots,n\}\setminus\{i\}$ such that $z_j­\neq0$. So $v=(0,0,\ldots,0,\frac{1}{z_j},0,\ldots,0)\in H_i$ (zeros everywhere except at the $j$-th position). We verify that $g(u,v)=1$ and hence $g|_{H_i}$ is non-degenerate. Set $\rho_{H_i}=z_ie_i$ where $z_i:z\mapsto z_i$ is the projection map from $\C^n\to \C$ and $e_i$ is the $i$-th element of the canonical $\C$ basis of $\C^n$. We clearly have 
$$\displaystyle \sum_{i=1}^n\rho_{H_i}=\Id.$$  
Besides $\rho_{H_i}\circ\rho_{H_j}=0,\,i­\neq j$; so the Dunkl condition is satisfied. Finally it is easy to see hat every $\rho_{H_i}$ is self-adjoint with respect to the metric $g$. Hence from the previous proposition we have a structure of almost-Frobenius structure on $V^\circ$.
\end{prooof}
\subsection{Frobenius determinants and toric varieties}
Let us start from the Frobenius determinant of a finite abelian group $G$ of order $n=\sharp{G}\geqslant3$
$$\Theta(G)(X_g)=\det(X_{gh^{-1}}).$$
We consider the associated smooth hypersurface of $\C^n$ (smoothness follows from the Euler relation) given by
$$\Theta(G)(X_g)=\det(X_{gh^{-1}})=1.$$
For example if $G=\Z/3\Z$ and we restrict to real variables we get the hypersurface 
$$\Sigma_3=\{(X,Y,Z)\in \R^3: X^3+Y^3+Z^3-3XYZ=1\}.$$
It is a surface of revolution well-known under the name hexenhut. Let us introduce the coordinates  \begin{equation*}
\begin{rcases}
&X+jY+j^2Z=x-iy\\
&X+j^2Y+jZ=x+iy\\
&X+Y+Z=z\\
&j=\exp(2i\pi/3)
\end{rcases}\Longrightarrow \Sigma_3\longleftrightarrow z(x^2+y^2)=1.
\end{equation*}
The picture of the surface $\{(x,y,z)\in \R^3: z(x^2+y^2)=1\}$ is shown below.
\begin{center}
\begin{tikzpicture}
    \begin{axis}[
        axis equal image,
        hide axis,
        z buffer = sort,
        view = {110}{23},
        scale = 1.5 ]
        \addplot3[
            surf,
            shader = faceted interp,
            samples = 25,
            domain = 0.04:6.5,
            y domain = 0:360,
            colormap name = mycolormap,
            thin
        ](
            {1/sqrt(x)*cos(y)},
            {1/sqrt(x)*sin(y)},
            {x}
        );
    \end{axis}
\end{tikzpicture}
\captionof{figure}{:\,A rotated Hexenhut: $z(x^2+y^2)=1$.}
\label{default}
\end{center}
Returning to the general case we make the change of variables (a birational morphism) as usual: setting $x_g=\displaystyle\sum_{g\in G}\chi(g)X_g$ we get an hypersurface of the form $\displaystyle\prod_{\chi\in \hat{G}}x_\chi=1$,  which we identify with $\displaystyle\prod_{i=1}^nx_i=1$. Homogenizing we obtain the projective hypersurface $\mathcal S$ defined by the following equation
\begin{equation}
\label{tor1}
\displaystyle\prod_{i=1}^nx_i=x_0^n,\,n\geqslant3.
\end{equation}
This hypersurface $\mathcal S$ will be our main object of study in this part.
\begin{lemma}
\label{singularpoints}
The variety $\mathcal S$ defined by \eqref{tor1} has exactly $n$ families of singular points and it contains exactly $n$ projective $(n-2)$-dimensional subspaces.
\end{lemma}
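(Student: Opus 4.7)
The plan is to treat both claims together by exploiting the description of $\mathcal S$ on the affine chart $\{x_0\neq 0\}$ as an algebraic torus. First I would exhibit the $n$ candidate hyperplanes,
\[ H_i := \{x_0=0\}\cap\{x_i=0\},\qquad i=1,\dots,n,\]
each of which is an $(n-2)$-plane of $\mathbb{P}^n$, hence a hyperplane of the $(n-1)$-fold $\mathcal S$. That $H_i\subset \mathcal S$ is immediate, since substituting $x_0=x_i=0$ makes both sides of \eqref{tor1} vanish.

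Next I would show that no other hyperplane of $\mathcal S$ can exist. The crucial point is that on the chart $\{x_0\ne 0\}$ the variety $\mathcal S$ becomes the algebraic torus $T=\{y_1y_2\cdots y_n=1\}\subset (\C^{\times})^n$ with $y_i:=x_i/x_0$. Any affine line $y_i=a_i+tb_i$ lying in $T$ would yield the polynomial identity $\prod_i(a_i+tb_i)\equiv 1$ in $t$; the top coefficient forces $\prod b_i=0$, and a short induction on $n$ (remove an index with $b_i=0$ and renormalise) then forces all $b_i=0$. Hence $T$ contains no positive-dimensional affine-linear subvariety, so any hyperplane of $\mathcal S$ must be contained in $\{x_0=0\}$. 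But $\mathcal S\cap\{x_0=0\}=\bigcup_{i=1}^n H_i$, and since a linear subspace is irreducible, any hyperplane of $\mathcal S$ sitting in this union must coincide with one of the $H_i$.

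For the singular points I would compute the Jacobian of $F(x_0,\dots,x_n)=x_1x_2\cdots x_n-x_0^n$:
\[\partial_{x_0}F=-nx_0^{n-1},\qquad \partial_{x_i}F=\prod_{\substack{1\le j\le n\\ j\ne i}}x_j\quad(1\le i\le n).\]
All partial derivatives vanish simultaneously exactly when $x_0=0$ and at least two of $x_1,\dots,x_n$ vanish, which gives
\[\mathrm{Sing}(\mathcal S)=\bigcup_{1\le i<j\le n}(H_i\cap H_j).\]
This singular locus organises naturally into $n$ families, one attached to each hyperplane: the family associated with $H_i$ is
\[\mathcal F_i:=H_i\cap\bigl(\textstyle\bigcup_{j\ne i}H_j\bigr),\]
i.e.\ the boundary arrangement inside the toric hyperplane $H_i$, whose union as $i$ ranges over $\{1,\dots,n\}$ recovers $\mathrm{Sing}(\mathcal S)$.

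The expected main obstacle is the non-existence of further hyperplanes in $\mathcal S$, for which the elementary inductive lemma that the torus $T=\{y_1\cdots y_n=1\}$ admits no affine line is the key ingredient; once this is in place, the remaining counts reduce to the Jacobian calculation above and to the irreducibility of linear subspaces.
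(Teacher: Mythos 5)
Your proof is correct and follows the same two-part skeleton as the paper's: the singular locus comes from the same Jacobian computation ($x_0=0$ together with at least two of $x_1,\dots,x_n$ vanishing, the incidence $x\in\mathcal S$ then being automatic), and the $n$ hyperplanes $H_i=\{x_0=x_i=0\}$ are located inside $\mathcal S\cap\{x_0=0\}=\bigcup_i H_i$ exactly as in the paper. Where you genuinely diverge is in ruling out further hyperplanes: the paper parametrizes a putative affine $(n-2)$-plane of $\{\prod y_i=1\}$ by $n-2$ direction vectors and a base point, substitutes, and kills the direction vectors coordinate by coordinate through a fairly laborious expansion; you instead observe that any positive-dimensional affine-linear subvariety contains an affine line and that $\prod_i(a_i+tb_i)\equiv 1$ in $\C[t]$ forces every factor to be a nonzero constant (your induction works, though degree additivity in the domain $\C[t]$ gives it in one line). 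This reduction to lines is shorter, cleaner, and yields the slightly stronger fact that $\mathcal S$ contains no positive-dimensional linear subspace meeting the chart $\{x_0\neq 0\}$. One caveat, which applies equally to the paper: the irreducible components of $\mathrm{Sing}(\mathcal S)=\bigcup_{i<j}(H_i\cap H_j)$ number $\binom{n}{2}$, not $n$ (the paper's own displayed list in fact runs over pairs of vanishing coordinates), so the phrase ``exactly $n$ families'' only holds under a regrouping such as your $\mathcal F_i=H_i\cap\bigcup_{j\neq i}H_j$; this is an imprecision in the statement itself rather than a gap in your argument.
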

\begin{proof}
Let $\delta=\left[\delta_0,\delta_1,\ldots,\delta_n\right]\in\C P_n$. It is a singular point of $\mathcal S$ if and only if it belongs to $\mathcal S$: $\left[\delta_0,\delta_1,\ldots,\delta_n\right]\in \mathcal S$ and the partial derivatives of the defining polynomial of $\mathcal S$ vanish at $\delta$. This means
\begin{equation}
\label{tor2}
\begin{split}
&\displaystyle \delta_0^n=\prod_{i=1}^n\delta_i\\
&n\delta_0^{n-1}=0,\,\,\prod_{\substack{i=1\\ i\not=j}}^n\delta_i=0, j\in\{1,2,\ldots,n\}.
\end{split}
\end{equation}
This requires that $\delta_0=0$ as well as two of the $\delta_i$ for $i\in\{1,2,\ldots,n\}$ be zero. This gives the families
$$\left[0,0,0,\delta_3,\ldots,\delta_n\right],\left[0,0,\delta_2,0,\delta_4,\ldots,\delta_n\right],\ldots,\left[0,\delta_1,\delta_2,\delta_3,\ldots,\delta_{n-2},0,0\right],$$
with $\delta_i\in\C,1\leqslant i\leqslant n$ not all zero. In order to find the $(n-2)$-dimensional subspaces in $\mathcal S$ we first consider the $(n-2)$-dimensional subspaces on the intersection of $\mathcal S$ with the hyperplane $x_0=0$ of $\C P_n$. The intersection is given by the equation
$$0=\displaystyle\prod_{i=1}^nx_i$$
and therefore consists of exactly the hyperplanes $h_i:=\{x_0=0=x_i=0\},1\leqslant i\leqslant n$. Any hyperplane of $\mathcal S$ which is not contained inside $\mathcal S\cap\{x_0=0\}$ gives a hyperplane of the affine variety given by the equation
$$1=\displaystyle\prod_{i=1}^nx_i.$$
Inserting the equation of an $(n-2)$-dimensional subspace $h$ with directions $\vec{v}_1=(v_1^j)_{1\leqslant j\leqslant n}$, $\vec{v}_2=(v_2^j)_{1\leqslant j\leqslant n},$ $\ldots$, $\vec{v}_{n-2}=(v_{n-2}^j)_{1\leqslant j\leqslant n}$ passing through the point $p=(p_1,\ldots,p_n)$ we get 
\begin{equation*}
\begin{split}
&1=(v_1^1\lambda_1+v_2^1\lambda_2+\ldots+v_{n-2}^1\lambda_{n-2}+p_1)(v_1^2\lambda_1+v_2^2\lambda_2+\ldots+v_{n-2}^2\lambda_{n-2}+p_2)\\
&\ldots (v_1^n\lambda_1+v_2^n\lambda_2+\ldots+v_{n-2}^n\lambda_{n-2}+p_n),\,\lambda_1,\ldots,\lambda_{n-2}\in \C.
\end{split}
\end{equation*}
By expanding we see that
$$v_1^1v_1^2\ldots v_1^n=0.$$
So one of the $v_1^j=0,$ $j=1,\ldots,n$. Suppose for instance that $v_1^1=0$. Then $p_1v_1^2v_1^3\ldots v_1^n=0$. $p_1\not=0$ because (by construction) $p_1p_2\ldots p_n=1$. Then $v_1^2v_1^3\ldots v_1^n=0$. Assume that $v_1^2=0$ then $p_1p_2v_1^3v_1^4\ldots v_1^n=0$. Continuing in this fashion, we see that $v_1^j=0$, $j=1,\ldots,n$. Likewise expanding the remaining terms we see that
$$v_i^j=0,i\in\{1,\ldots,n-2\},j\in\{1,\ldots,n\}.$$
This contradicts the fact that $\vec{v}_1,\ldots,\vec{v}_{n-2}$ are directions for the hyperplane $h$. Thus there are exactly $n$-hyperplanes $h_i$ which lie in $\mathcal S$.
\end{proof}
\begin{remark}
This lemma should be compared to the case of smooth cubic surfaces in $\C P_3$ which contain exactly $27$ lines, see \cite[p.~172]{mumford1976}.
\end{remark}
In this final part we wish to prove the following assertion
\begin{theorem}
Let $n\geqslant3$. The projective variety $\mathcal S=\proj(\C\left[x_0,x_1,\ldots,x_n\right]/\displaystyle \langle(x_0^n-\prod_{i=1}^nx_i)\rangle)$ is a normal toric variety.
\end{theorem}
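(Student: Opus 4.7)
The plan is to verify the three defining properties of a normal toric variety for $\mathcal{S}:=\proj(\C[x_0,x_1,\ldots,x_n]/\langle x_0^n-\prod_{i=1}^n x_i\rangle)$: that $\mathcal{S}$ contains an algebraic torus as a Zariski-dense open subvariety, that it admits an action of this torus extending the group multiplication, and that it is normal.

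For the torus structure, I would exhibit $T:=\{[1:t_1:\ldots:t_n]\in\C P_n:\ t_i\in\C^\times,\ t_1t_2\cdots t_n=1\}$. As the kernel of the multiplication homomorphism $(\C^\times)^n\to\C^\times$, $T$ is a closed subgroup of $(\C^\times)^n$ isomorphic to $(\C^\times)^{n-1}$, and clearly $T\subset\mathcal{S}$. The complement $\mathcal{S}\setminus T=\mathcal{S}\cap V(x_0x_1\cdots x_n)$ is a proper closed subset of $\mathcal{S}$ (for instance $[1:1:\ldots:1]\in T$), so $T$ is open in $\mathcal{S}$; since $\dim T=n-1=\dim\mathcal{S}$, $T$ is also dense. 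The formula
\[t\cdot[x_0:x_1:\ldots:x_n]:=[x_0:t_1x_1:\ldots:t_nx_n]\]
defines an action $T\times\mathcal{S}\to\mathcal{S}$, since $\prod_{i=1}^n(t_ix_i)=\bigl(\prod_i t_i\bigr)\bigl(\prod_i x_i\bigr)=x_0^n$ ensures the image lies in $\mathcal{S}$, and this action visibly restricts to the multiplication on $T$.

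For normality I would invoke Serre's criterion: a Noetherian integral scheme is normal if and only if it satisfies $R_1$ (regular in codimension one) and $S_2$. Irreducibility of $\mathcal{S}$ follows from the irreducibility of $x_0^n-\prod_i x_i$ in $\C[x_0,\ldots,x_n]$, seen by viewing it as a polynomial in $x_0$ whose roots $\zeta(x_1\cdots x_n)^{1/n}$, $\zeta^n=1$, are Galois conjugates over $\C(x_1,\ldots,x_n)$ with no nontrivial intermediate factorization. Since $\mathcal{S}$ is a hypersurface in the smooth variety $\C P_n$, its local rings are complete intersections, hence Cohen-Macaulay, so $S_2$ is automatic. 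For $R_1$, I would invoke \lemref{singularpoints}, which describes the singular locus of $\mathcal{S}$ explicitly as a finite union of linear subspaces obtained by setting $x_0=0$ together with two of the coordinates $x_1,\ldots,x_n$; each such subspace has dimension $n-3$ in $\C P_n$. Hence the singular locus has codimension $(n-1)-(n-3)=2$ in the $(n-1)$-dimensional variety $\mathcal{S}$, establishing $R_1$.

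The main technical ingredient is the description of the singular locus, which is already carried out in \lemref{singularpoints}. Beyond that, the proof is a routine packaging of the $R_1+S_2$ criterion (applicable because hypersurfaces in projective space are Cohen-Macaulay) with the manifest torus structure read off from the multiplicatively homogeneous defining equation $x_0^n=\prod_i x_i$.
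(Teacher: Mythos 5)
Your proof is correct, but for the normality part you take a genuinely different route from the paper. The paper proceeds chart by chart: it covers $X$ by the affine opens $D_+(x_j)\cap X$, presents each coordinate ring as an affine semigroup algebra $\C[S_j]$ embedded in $\C[\Z^{n-1}]$, and deduces integrality and normality by exhibiting each $\C[S_j]$ as a localization of a normal ring; the dense torus and its action are then read off from the chart $D_+(x_0)\cap X$. You instead get normality from Serre's criterion: $S_2$ comes for free because a hypersurface in $\C P_n$ is a local complete intersection, hence Cohen--Macaulay, and $R_1$ comes from \lemref{singularpoints}, since the singular locus is a union of linear subspaces of dimension $n-3$, i.e.\ of codimension $2$ in the $(n-1)$-dimensional hypersurface. (Note that the singular locus actually consists of $\binom{n}{2}$ such subspaces, one for each pair of vanishing coordinates among $x_1,\dots,x_n$, rather than $n$; this does not affect the codimension count.) Your torus and action are the same as the paper's up to the parametrization of $T$ as the kernel of the product map $(\C^\times)^n\to\C^\times$. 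The trade-off: your argument is shorter and exploits the singular-locus computation already in the paper, at the price of invoking Serre's criterion and Cohen--Macaulayness as black boxes, whereas the paper's semigroup-algebra presentation is more in the spirit of toric geometry and yields the affine semigroups of the charts explicitly. Two small points you should tighten: the irreducibility of $x_0^n-\prod_i x_i$ deserves one more line (a proper monic factor of $x_0^n-c$ over $\C(x_1,\dots,x_n)$ would force $c^k$ to be an $n$-th power for some $0<k<n$, impossible since $c=x_1\cdots x_n$ is squarefree), and you should note that $R_1+S_2$ together with irreducibility gives reducedness, so that $\mathcal S$ really is the integral scheme to which Serre's criterion is being applied.
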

We introduce firstly some definitions about toric geometry. Our main reference is \cite{cox2011}.
\begin{definition}
A semigroup $\mathscr S$ is a set which is stable under an associative addition operation (commutative) and which contains a unit element denoted $0$.
\end{definition}
\begin{remark}
Given a semigroup $\mathscr S$ the semigroup algebra $\C[\mathscr S]$ is the algebra with basis $\chi^u$ for $u\in \mathscr S$ together with the rules $\chi^0=1$, $\chi^{u_1}\chi^{u_2}=\chi^{u_1+u_2}$, $u_1,u_2\in \mathscr S$, extended by $\C$-linearity. A morphism $\phi:\mathscr S\to \mathscr S^\prime$ between two semigroups $\mathscr S$ and $\mathscr S^\prime$ is an additive morphism which maps zero to zero. A morphism a semigroup $\phi:\mathscr S\to \mathscr S^\prime$ induces a morphism of $\C$-algebra $f:\C\left[\mathscr S\right]\to\C\left[\mathscr S^\prime\right]$ given by $\chi^u\mapsto \chi^{\phi(u)}$. We will generally assume that $\mathscr S$ is integral, i.e. we can embed $\mathscr S$ as a semigroup in a some lattice. This implies that $\C\left[\mathscr S\right]$ is a domain. In general an integral finitely generated semigroup $\mathscr S$ is given by the image of a morphism of semigroups $\phi:\N^n\to \Z^m$. Such a morphism induces a surjective morphism $f:\C\left[t_1,\ldots,t_n\right]\to\C\left[\mathscr S\right]$. And the kernel of $f$ is the ideal $I:=\langle t^a-t^b,\,a,b\in\N^n:\phi(a)=\phi(b)\rangle$.
\end{remark}
\begin{definition}
\label{tor271}
Let $k$ be a field. A $k$-torus of dimension $n$ is an algebraic group over $k$ which is isomorphic (as an algebraic group) to the $n$-dimensional algebraic torus $G_{m,k}^n:=\spec(k\left[t_1^{\pm},\ldots,t_n^{\pm}\right])$
\end{definition}
\begin{definition}
\label{tor270}
A toric variety $X$ over $\C$ is a non necessarily normal, irreducible $\C$-algebraic variety which contains an algebraic torus $G_{m,\C}^n$, for some positive $n$ as a Zariski open (automatically dense) subset and such that the action of the torus on itself extends to an algebraic action of $G_{m,\C}^n$ on $X$. (By algebraic action we mean an action $G_{m,\C}^n\times X\to X$ given by a morphism).  
\end{definition}
\begin{remark}
\label{tor272}
One way to view a $k$-scheme $X$ for $k$ a field of characteristic zero (say) is to view it as its functor of points (\cite[p.~253]{eisenbud1999}), associating with any $k$-algebra $R$ the set $h_X(R)=\Hom_R(\spec(R),X)$ and with any morphism $f:R\to S$ of $k$-algebras, the map $h_X(f):X(R)\to X(S)$ induced by the composition with the morphism of schemes $f^{\star}:\spec(S)\to\spec(R)$. In this respect the functor of points of $X=G^n_{m,k}$ is simply given by $h_X(R)=(R^{\times})^n$, and for a morphism $f:R\to S$, by the map $h_X(f):(R^{\times})^n\to(S^{\times})^n$ which is induced by $f$ coordinate-wise. That $X$ be an algebraic group means that these sets $h_X(R)$ are endowed (by coordinate-wise multiplication) with a structure of groups, and that the maps $h_X(f)$ are morphisms of groups. The unit element is the point $(1,1,\ldots,1)$ of $X(k)$.

Likewise one can describe the functor of points of $P^n_k$ for a field $k$. For a $k$-algebra $R$, $P^n_k(R)$ is given by (notation) a pair $(\mathscr{L}, \left[s_0,s_1,\ldots,s_n\right])$ where $\mathscr{L}$ is a line bundle (rank one locally free $\mathscr{O}_{\spec(R)}$-module) on $\spec(R)$ and the $s_i$ are global sections of $\mathscr{L}$ such that $\spec(R)=\displaystyle\cup_{i=0}^nD(s_i)$, where $D(s_i):=\{x\in \spec(R), s_i(x)\not =0\}$, modulo isomorphism; here $s_i(x)$ denotes the image of $s_i$ in the fiber of $\mathscr L$ over $x$. In this respect $R$-points of a projective subscheme $\mathscr Z$ of $P^n_k$ correspond to $R$-points of $P^n_k$ which "satisfy" the defining equations of $\mathscr Z$, \cite[p.~260]{eisenbud1999}.

Finally the action of an algebraic group variety $G$ over $k$ on a variety $X$ is given by a functorial (in $R$) action of the group $G(R)$ on $X(R)$. 
\end{remark}
\begin{proooof}
We set $\mathcal S=\proj(\C\left[x_0,x_1,\ldots,x_n\right]/\displaystyle \langle(x_0^n-\prod_{i=1}^nx_i)\rangle)$. Since $\langle\displaystyle (x_0^n-\prod_{i=1}^nx_i)\rangle$ is a homogeneous prime ideal in $\C\left[x_0,\ldots,x_n\right]$, we have a closed immersion $\mathcal S\hookrightarrow Y$. Thus $\mathcal S$ is a projective subscheme of $Y$, which is separated over $Y$ (\cite[p.~99]{hartshorne1977}). As $Y:=\C P_n$ is separated over $\C$, $\mathcal S$ is separated over $\C$ (composition of separated morphisms is separated, \cite[p.~99]{hartshorne1977}). Moreover because irreducible closed sets of $Y$ correspond to homogeneous prime ideals of $\C\left[x_0,x_1,\ldots,x_n\right]$, $\mathcal S$ is irreducible. To show that $\mathcal S$ is an integral scheme it remains to show that $\mathcal S$ is reduced. For that it suffices to show that $\mathcal S$ can be covered by open affines whose coordinate rings are reduced. Let $D_+(x_0)$,$\ldots$,$D_+(x_n)$ be the open cover of $Y=\proj(\C\left[x_0,x_1,\ldots,x_n\right])$ by principal open sets. We take as open covering of $\mathcal S$ the following
$$\displaystyle\bigcup_{i=0}^n(D_+(x_i)\cap \mathcal S).$$
We have 
$$\displaystyle D_+(x_0)\cap \mathcal S=\spec\left(\dfrac{\C\left[\frac{x_1}{x_0},\ldots,\frac{x_n}{x_0}\right]}{\langle1-\frac{x_1}{x_0}\frac{x_2}{x_0}\ldots\frac{x_n}{x_0}\rangle}\right)=D_+(x_1)\cap D_+(x_2)\cap\ldots\cap D_+(x_n)\cap \mathcal S.$$ 
$D_+(x_0)\cap \mathcal S$ is a torus. Indeed if we set $y_j=\dfrac{x_j}{x_0},\,1\leqslant j\leqslant n-1$ then we find 
$$D_+(x_0)\cap \mathcal S=\spec(\C\left[y_1,y_1^{-1},y_2,y_2^{-1},\ldots,y_{n-1},y_{n-1}^{-1}\right])$$
and this is an $(n-1)$-dimensional torus by definition.

For $1\leqslant j\leqslant n$
$$D_+(x_j)\cap \mathcal S=\spec\left(\dfrac{\C\left[\frac{x_0}{x_j},\frac{x_1}{x_j},\ldots,\frac{x_{j-1}}{x_j},\frac{x_{j+1}}{x_j},\ldots,\frac{x_n}{x_j}\right]}{\langle(\frac{x_0}{x_j})^n-(\prod^n_{\substack{i=1\\ i\not=j}}x_i)/x_j^{n-1}\rangle}\right).$$
If we set $z_0=\frac{x_0}{x_j}$, $z_1=\frac{x_1}{x_j}$,$\ldots$,$z_{j-1}=\frac{x_{j-1}}{x_j}$, $z_{j+1}=\frac{x_{j+1}}{x_j}$,$\ldots$,$z_n=\frac{x_n}{x_j}$ we find
$$D_+(x_j)\cap \mathcal S=\spec\left(\C\left[z_0,z_1,\ldots,z_{j-1},z_{j+1},\ldots,z_n\right]/ \langle z_0^n-\prod^n_{\substack{i=1\\ i\not=j}}z_i\rangle\right).$$
Let us exhibit the coordinate ring of each $D_+(x_j)\cap \mathcal S$, $i=0,1,\ldots,n$ as the subring of some integral ring. This will imply the fact that they are reduced.

For $i=0$ the coordinate ring of $D_+(x_0)\cap \mathcal S$ is given by $\C\left[y_1,y_1^{-1},y_2,y_2^{-1},\ldots,y_{n-1},y_{n-1}^{-1}\right]$ and this is an integral ring hence reduced. Let $e_0=(1,0,\ldots,0)$, $e_1=(0,1,0,\ldots,0)$, $e_2=(0,0,1,\ldots,0)$,$\ldots$, $e_{j-1}=(0,0,\ldots,1,0,\ldots,0)$ (1 at the $j$ entry), $e_{j}=(0,0,\ldots,1,\ldots,0)$, $\ldots$, $e_n=(0,0,\ldots,0,1)$ the canonical basis of $\N^{n+1}$ as a semigroup. For each $1\leqslant j\leqslant n-1$ the semigroup generated by $e_0=(1,0,\ldots,0)$, $e_1=(0,1,0,\ldots,0)$, $e_2=(0,0,1,\ldots,0)$,$\ldots$, $e_{j-1}=(0,0,\ldots,1,0,\ldots,0)$ (1 at the $j$ entry), $e_{j+1}$, $\ldots$, $e_n=(0,0,\ldots,0,1)$ is isomorphic to $\N^{n}$. Let also $(e^{\prime}_k)_{1\leqslant k\leqslant n-1}$ be the canonical basis of the lattice $\Z^{n-1}$ as a $\Z$-module. If $1\leqslant j\leqslant n-1$ we define the morphism of semigroups $\phi_j$ by
\begin{equation*}
\begin{split}
&\phi_j(e_0)=-e_j^\prime,\phi_j(e_1)=e_1^\prime-e_j^\prime,\ldots,\phi(e_{j-1})=e_{j-1}^\prime-e_j^\prime,\\
&\phi_j(e_{j+1})=e_{j+1}^\prime-e^\prime_{j},\ldots,\phi_j(e_n)=-2e_j^\prime-(e_1^\prime+e_2^\prime+\ldots+e_{j-1}^\prime+e_{j+1}^\prime+\ldots+e^\prime_{n-1}).
\end{split}
\end{equation*}
For every $j$ the image of $\phi_j$ is a subsemigroup $\mathscr S_j$ of $\Z^{n-1}$. Also $\phi_j$ induces a morphism of $\C$-algebras $f_j:\C\left[\N^n\right]=\C\left[z_0,z_1,\ldots,z_{j-1},z_{j+1},\ldots,z_n\right]\displaystyle\to\C\left[\Z^{n-1}\right]$ which is surjective onto $\C\left[\mathscr S_j\right]$. The kernel of $f_j$ is generated by the polynomial $(z_0^n-\prod_{\substack{i=1\\i\not=j}}^nz_i)$. This gives the morphisms
$$\C\left[\Z^{n-1}\right]\hookleftarrow\C\left[\mathscr S_j\right]\simeq\C\left[z_0,z_1,\ldots,z_{j-1},z_{j+1},\ldots,z_n\right]/\langle(z_0^n-\prod_{\substack{i=1\\i\not=j}}^nz_i)\rangle.$$
Since $\C\left[\Z^{n-1}\right]$ is integral, $\C\left[\mathscr S_j\right]$ is integral hence reduced, for any $1\leqslant j\leqslant n-1$.

Let $j=n$ we define the morphism of semigroup with image $\mathscr S_n$ by
\begin{equation*}
\begin{split}
&\phi_n(e_0)=e_1^\prime+e_2^\prime+\ldots+e_{n-1}^{\prime},\phi_n(e_1)=2e_1^\prime+e_2^\prime+\ldots+e_{n-1}^\prime,\ldots,\\
&\phi_n(e_{l})=e_{1}^\prime+e_2^\prime+\ldots+e_{l-1}^\prime+2e_l^\prime+e_{l+1}^\prime+\ldots+e_{n-1}^\prime,\ldots,\\
&\phi_n(e_{n-1})=e_1^\prime+e_2^\prime+\ldots+e_{n-2}^\prime+2e^\prime_{n-1}.
\end{split}
\end{equation*}
We then have an induced morphism of semigroup algebras $$f_n:\C\left[\N^n\right]=\C\left[z_0,z_1,\ldots,z_{j},z_{j+1},\ldots,z_{n-1}\right]\to\C\left[\Z^{n-1}\right]$$ with kernel given by the ideal $\langle (z_0^n-\prod_{i=1}^{n-1}z_i)\rangle$ and which is surjective onto $\C\left[\mathscr S_n\right]$. Thus
$$\C\left[\Z^{n-1}\right]\hookleftarrow\C\left[\mathscr S_n\right]\simeq\C\left[z_0,z_1,\ldots,z_{j},z_{j+1},\ldots,z_{n-1}\right]/\langle(z_0^n-\prod_{i=1}^{n-1}z_i)\rangle.$$ 
Therefore $\C\left[\mathscr S_n\right]$ is integral hence reduced. 

Let us show that $\mathcal S$ is normal. To see this we need to show (for instance) that the algebra $\C[\mathscr S_0]=\C\left[y_1,y_1^{-1},y_2,y_2^{-1},\ldots,y_{n-1},y_{n-1}^{-1}\right]$ as well as $\C\left[\mathscr S_j\right]$, $1\leqslant j\leqslant n$ are normal rings. $\C\left[y_1,y_1^{-1},y_2,y_2^{-1},\ldots,y_{n-1},y_{n-1}^{-1}\right]$ is as is well-known normal: it is the localization of the UFD (which is normal) $\C\left[y_1,\ldots,y_{n-1}\right]$ at $y_{1}\ldots y_{n-1}$ (see \cite[p.~38]{cox2011}). Since $D_+(x_0)\cap \mathcal S$ is non-empty and has a normal coordinate ring $\C[\mathscr S_0]$, $D_+(x_0)\cap \mathcal S$ is a normal affine variety. Besides $D_+(x_0)\cap \mathcal S\subset D_+(x_i)\cap \mathcal S$, $1\leqslant i\leqslant n$. Hence each $D_+(x_i)\cap \mathcal S$, $1\leqslant i\leqslant n$ is non-empty. Finally using \cite[(A,31), p. 175; p. 128]{dieudonne} one sees that the coordinate rings $\C[\mathscr S_i]$, $1\leqslant i\leqslant n$ of the $D_+(x_i)\cap \mathcal S$, $1\leqslant i\leqslant n$ are  normal rings. Hence all the coordinate rings $\C[\mathscr S_i]$, $0\leqslant i\leqslant n$ of the $D_+(x_i)\cap \mathcal S$, $1\leqslant i\leqslant n$ are normal rings. Thus $\mathcal S$ is covered by non-empty affine patches $D_+(x_i)\cap \mathcal S$, $0\leqslant i\leqslant n$ whose coordinate rings are normal rings. Hence $\mathcal S$ is a normal.

In conclusion the scheme $\mathcal S$ is an integral normal separated projective variety of $\C$. To finish the proof we must show that it admits a torus action. In order to see this we remark (as seen above) that 
$$\displaystyle D_+(x_0)\cap \mathcal S=\spec\left(\dfrac{\C\left[\frac{x_1}{x_0},\ldots,\frac{x_n}{x_0}\right]}{\langle1-\frac{x_1}{x_0}\frac{x_2}{x_0}\ldots\frac{x_n}{x_0}\rangle}\right)=D_+(x_1)\cap D_+(x_2)\cap\ldots\cap D_+(x_n)\cap \mathcal S$$ 
is openly immersed in $\mathcal S$ as a non-empty $(n-1)$-dimensional torus. Since $\mathcal S$ is irreducible, $D_+(x_0)\cap \mathcal S$ is necessarily dense in $\mathcal S$. Finally the action of $G_{m,\C}^{n-1}$ on $\mathcal S$ which restricts to an action of $G_{m,\C}^{n-1}$ on itself is given on the set of $R$-points (the involved line bundle remains fixed) for a $\C$-algebra $R$ by
$$(t_1,\ldots,t_{n-1})\cdot\left[s_0,\ldots,s_n\right]=\left[s_0,t_1s_1,t_2s_2,\ldots,t_{n-1}s_{n-1},t_1^{-1}t_2^{-1}\ldots t_{n-1}^{-1}s_n\right]$$
where $(t_1,\ldots,t_{n-1})\in (R^\times)^{n-1}$ and $\left[s_0,\ldots,s_n\right]$ is as above.
\end{proooof}
\section{Data Availability}
This paper is not using any external data.
 
\end{document}